\newtheorem{theorem}{Theorem}
\newtheorem{definition}[theorem]{Definition}
\newtheorem{proposition}[theorem]{Proposition}
\newtheorem{lemma}[theorem]{Lemma}
\newtheorem{corollary}[theorem]{Corollary}
\newtheorem{fact}[theorem]{Fact}
\newtheorem*{claim*}{Claim}
\theoremstyle{remark}
\newtheorem*{remark*}{Remark}
     \def\l{\ell} \def\la{\lambda}
\newcommand{\eps}{\varepsilon}
\renewcommand{\epsilon}{\eps}
\renewcommand{\P}{\mathbb{P}}
\newcommand{\one}{\mathbbm{1}}
\newcommand{\precdot}{\prec\mathrel{\mkern-5mu}\mathrel{\cdot}}
\newcommand{\cB}{\mathcal{B}}
\newcommand{\tlambda}{\widetilde{\lambda}}
\newcommand{\tmu}{\widetilde{\mu}}
\newcommand{\tnu}{\widetilde{\nu}}
\begin{document}
	
	\title[Realizability of hypergraphs and contingency tables]{Realizability of hypergraphs and high-dimensional contingency tables with random degrees and marginals}
	\author{Nicholas Christo}

	\address{University of Illinois, Chicago. Dept of Mathematics, Statistics and Computer science.}
	\email{nchrist5@uic.edu}
	
	\author{Marcus Michelen}
	\address{University of Illinois, Chicago. Dept of Mathematics, Statistics and Computer science.}
	\email{michelen@uic.edu}
	
	\begin{abstract}
		A result of Deza, Levin, Meesum, and Onn shows that the problem of deciding if a given sequence is the degree sequence of a 3-uniform hypergraph is NP complete.  We tackle this problem in the random case and show that a random integer partition can be realized as the degree sequence of a $3$-uniform hypergraph with high probability.  These results are in stark contrast with the case of  graphs, where a classical result of Erd\H{o}s and Gallai provides an efficient algorithm for checking if a sequence is a degree sequence of a graph and a result of Pittel shows that with high probability a random partition is not the degree sequence of a graph.  
		
		By the same method, we address analogous realizability problems about high-dimensional binary contingency tables.  We prove that if $(\lambda,\mu,\nu)$ are three independent random partitions then with high probability one can construct a three-dimensional binary contingency table with marginals $(\lambda,\mu,\nu)$.  Conversely, if one insists that the contingency table forms a pyramid shape, then we show that with high probability one cannot construct such a contingency table.  These two results confirm two conjectures of Pak and Panova.
	\end{abstract}

	\maketitle

	\section{Introduction}

	When can one realize a sequence of integers as the degree sequence of some graph? This combinatorial decision problem is fully answered thanks to a classical theorem of Erd\H{o}s and Gallai \cite{ErdosGallai1960} which provides an easy-to-check combinatorial condition on the would-be degree sequence.  A closely-related theorem of Gale and Ryser \cite{Gale1987, Ryser_1957} provides a similar criterion to decide if a bipartite graph exists where the degree sequences of each side of the bipartition are prescribed.  By using the Erd\H{o}s-Gallai criterion along with proving new results on the structure of random integer partitions, a 1999 theorem of Pittel \cite{PITTEL1999123} shows that given a random integer partition, with high probability it \emph{is not} the degree sequence of some graph, thereby confirming a 1982 conjecture of Wilf.

	If one asks similar questions about hypergraphs, then the story becomes much richer and more complex than the graph setting.    Prior to Erd\H{o}s-Gallai, Havel \cite{havel1955remark} and Hakimi \cite{hakimi1962realizability} independently described polynomial-time algorithms for the graph realization problem given a proposed degree sequence. Dewdney \cite{dewdney1975degree} generalized the Havel-Hakimi algorithm to hypergraphs, however it is not efficient.  Deza, Levin, Meesum, and Onn \cite{deza2018optimization} showed that deciding if a given sequence is the degree sequence of a 3-uniform hypergraph is NP-Complete, thereby resolving a 30-year old conjecture of Colbourn, Kocay and Stinson \cite[Prob. 3.1]{colbourn86}; this is in sharp contrast to the graph setting, in which case the Erd\H{o}s-Gallai and Havel-Hakimi theorems provide easy-to-implement polynomial-time algorithms.  We note that there are some results known for the hypergraph realization problem under certain cases (see, e.g.  \cite{frosini2013degree,behrens2013new,aldosari2019enumerating,li2023dense} and the references therein for more).  
	
	Our first main theorem shows that in contrast to the graph case---and despite these hardness results---a random partition in fact \emph{is} the degree sequence of a hypergraph with high probability.  
	
	\begin{theorem}\label{th:hypergraph}
		There is a constant $c > 0$ so that if $\lambda$ is an integer partition of $3n$ chosen uniformly at random, then with probability at least $1 - e^{-c\sqrt{n}}$ there is a $3$-uniform hypergraph with degree sequence given by $\lambda$.
	\end{theorem}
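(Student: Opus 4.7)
The plan is to reduce \cref{th:hypergraph} to the 3D binary contingency table problem via the equivalence between tripartite $3$-uniform hypergraphs and 3D binary contingency tables. Given $\lambda \vdash 3n$, I would split the parts of $\lambda$ into three color classes, each summing to exactly $n$, then construct a tripartite $3$-uniform hypergraph whose tripartite degree sequence matches the split. The resulting hypergraph is automatically simple (each edge is a triple with one vertex per color class, and the edges are indexed by the $1$-entries of the table) and has degree sequence equal to $\lambda$ as a multiset.

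The first step establishes that, with probability $1 - e^{-c\sqrt{n}}$, a uniformly random $\lambda \vdash 3n$ has the typical structural properties: the number of parts $N$ and the largest part $\lambda_1$ are both $\Theta(\sqrt{n}\log n)$, $\lambda$ contains $\Omega(\sqrt{n})$ parts equal to $1$, and the Young diagram closely follows the Vershik--Kerov limit shape. These are standard large-deviation bounds for random integer partitions. For the splitting step, I would use a balanced greedy procedure (process the parts in decreasing order and assign each to the bucket with smallest current total), producing three bucket sums within $O(\lambda_1) = O(\sqrt{n}\log n)$ of $n$, and then adjust to exactly $n$ each by reassigning a few $1$-parts between buckets. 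Call the resulting sub-partitions $\lambda^{(1)}, \lambda^{(2)}, \lambda^{(3)}$. Applying the paper's construction for 3D binary contingency tables with marginals $(\lambda^{(1)}, \lambda^{(2)}, \lambda^{(3)})$ would then yield the desired tripartite hypergraph.

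The main obstacle is that $(\lambda^{(1)}, \lambda^{(2)}, \lambda^{(3)})$ are \emph{not} three independent uniform partitions of $n$: they are dependent (partitioning a common $\lambda$) and each typically has $\approx (\sqrt{n}\log n)/3$ parts rather than $\approx \sqrt{n}\log n$. One must therefore verify that the contingency-table construction still goes through on this broader class of marginals---either by isolating a purely deterministic combinatorial sufficient condition on the marginals that $(\lambda^{(1)}, \lambda^{(2)}, \lambda^{(3)})$ satisfies with probability $1 - e^{-c\sqrt{n}}$, or by a coupling argument with three genuinely independent random partitions of $n$. In either case, controlling the failure probability at the exponential rate $e^{-c\sqrt{n}}$ is delicate and is the technical heart of the argument.
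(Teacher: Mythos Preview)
Your proposal is correct and follows essentially the same route as the paper: split $\lambda \vdash 3n$ into three partitions of $n$, then invoke a deterministic sufficient condition for realizability as 3D binary contingency-table marginals (the paper's \cref{th:deterministic}), interpreting the resulting table as a tripartite $3$-uniform hypergraph. Your anticipated ``deterministic combinatorial sufficient condition'' is exactly \cref{th:deterministic}, and once it is available the exponential failure bound is immediate from the shape theorem, so this step is not as delicate as you feared.

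The one device you did not anticipate, and which streamlines the splitting considerably, is the monotonicity of hypergraph realizability under the dominance order (\cref{lem:hyper-dom}). The paper uses this to first \emph{increase} $\lambda$ to a convenient form---collapsing all overly large parts into a single part and converting the small-part mass into many parts of a fixed size $a \approx \sqrt{n}/A$---and only then splits into three pieces, each of which then manifestly satisfies the hypotheses of \cref{th:deterministic}. Your greedy-plus-ones splitting would also work, but you would then need to argue that each of the three pieces inherits enough small-part mass and not too much large-part mass from the original $\lambda$; the dominance trick makes this verification trivial and avoids any analysis of the greedy allocation.
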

	
	We note that since there are $\exp(\Theta(\sqrt{n}))$ many integer partitions of $3n$ and there exist integer partitions that are not the degree sequence of a $3$-uniform hypergraph---such as the partition of $3n$ consisting of only one part---this theorem is sharp up to the value of $c > 0$.  Our proof of \cref{th:hypergraph} is in fact algorithmic, and while we do not explicitly analyze the run-time in this work, it can be shown to run in polynomial time.  A natural question is whether one can require that the hypergraph is linear; we show in \cref{sec:linear-hypergraph} that with high probability there is no such $3$-uniform linear hypergraph with degree sequence $\lambda$.

	We prove  \cref{th:hypergraph} by working in a more general setting of contingency tables.  In the case of graphs, if one identifies a bipartite graph with its biadjacency matrix, then the question of bipartite graph realizability becomes a question of asking if one can construct a zero-one matrix with prescribed row and column sums.  Integer arrays with prescribed row and column sums are often referred to \emph{contingency tables}; if the entries are restricted to be only zeros and ones, then such arrays are called \emph{binary contingency tables}.  In this language, the Gale-Ryser theorem provides a criterion for the existence of a binary contingency table with prescribed row and column sums.

	If one increases from two-dimensional arrays to three-dimensional arrays and asks an analogous realizability problem, then no such criterion is known.  More specifically, a \emph{three-dimensional binary contingency table} is a three-dimensional array $(M(i,j,k))_{i,j,k \geq 1}$ with $M(i,j,k) \in \{0,1\}$ for all $i,j,k$.  Its \emph{marginals} are the vectors $\lambda,\mu,\nu$ defined via 
	$$\lambda_i = \sum_{j,k}M(i,j,k)\qquad \mu_j = \sum_{i,k} M(i,j,k)\qquad \nu_k = \sum_{i,j} M(i,j,k)\,.$$
	Rather than asking for the existence of a certain type of graph with a prescribed degree sequence, this amounts to asking for a tripartite hypergraph, i.e.\ a three-uniform hypergraph whose vertices are split into three sets, where each edge must take exactly one vertex from each of the three sets and degrees within each of the three sets are prescribed by the marginals $\lambda,\mu$ and $\nu$ respectively.  As in the case of \cref{th:hypergraph}, we resolve the realizability problem for three-dimensional binary contingency tables in the random case.  
	
	\begin{theorem}\label{th:main}
		There is a constant $c > 0$ so that if $\lambda,\mu,\nu$ are integer partitions of $n$ chosen independently and uniformly at random then one can construct a three-dimensional binary contingency table with marginals $(\lambda,\mu,\nu)$ with probability at least $1 - e^{-c\sqrt{n}}$.
	\end{theorem}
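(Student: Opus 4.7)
The plan is to reduce the construction of a three-dimensional binary contingency table to a family of two-dimensional Gale--Ryser realizability problems, one per slice, and then verify each via a randomized allocation. First I would invoke the standard structural facts about a uniformly random partition $\lambda$ of $n$: with probability $1 - e^{-c\sqrt{n}}$, $\ell(\lambda) = \Theta(\sqrt{n}\log n)$, $\lambda_1 = O(\sqrt{n}\log n)$, there are $\Theta(\sqrt{n})$ parts of any fixed size, and after scaling the Young diagram closely tracks the Temperley limit curve. These hold simultaneously for $\lambda$, $\mu$, $\nu$ after a union bound, so throughout the argument I may condition on the three partitions being typical in this sense.

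The structural reduction is to view the target table as a stack of $\ell(\nu)$ two-dimensional binary slices $M(\cdot,\cdot,k)$. Realizability then becomes the problem of finding intermediate marginals $r^{(k)} \in \Z_{\geq 0}^{\ell(\lambda)}$ and $c^{(k)} \in \Z_{\geq 0}^{\ell(\mu)}$ satisfying $\sum_k r^{(k)} = \lambda$, $\sum_k c^{(k)} = \mu$, and $\sum_i r^{(k)}_i = \sum_j c^{(k)}_j = \nu_k$, such that the Gale--Ryser condition holds for each pair $(r^{(k)}, c^{(k)})$; each slice can then be filled in independently. To construct these intermediate marginals I would use a two-phase approach: a deterministic \emph{heavy phase} that places the $O(\log n)$ largest parts of $\lambda$ and $\mu$ into the largest slices in a way that leaves slack for the remainder, followed by a \emph{bulk phase} using a randomized allocation in which each unit of remaining mass contributing to $\lambda_i$ is independently assigned to slice $k$ with probability proportional to $\nu_k$ (and symmetrically for $\mu$). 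A Chernoff-type argument then shows that with probability $1 - e^{-c'\sqrt{n}}$ the bulk marginals concentrate around deterministic rescaled copies of $\lambda$ and $\mu$, and the abundant supply of parts of size one (of order $\sqrt{n}$ in each partition) provides the flexibility to fix residual imbalances so that the sum constraints are satisfied exactly.

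The main obstacle is verifying the Gale--Ryser condition simultaneously across all $\ell(\nu) = \Theta(\sqrt{n}\log n)$ slices. Because Gale--Ryser amounts to a dominance-order inequality between $r^{(k)}$ and the conjugate of $c^{(k)}$---two shapes derived from partitions with very similar limit profiles---this inequality has limited slack along the middle of the profile, and the heavy-phase placement must be coordinated with the bulk-phase allocation so that the resulting shapes exhibit uniform slack away from the critical region. The crux is to establish a per-slice Gale--Ryser failure probability of at most $\exp(-c''\sqrt{n})$, strong enough that a union bound over $\Theta(\sqrt{n}\log n)$ slices still gives the overall $1 - e^{-c\sqrt{n}}$ bound. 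This quantitative comparison between the randomized bulk shape and the rescaled limit-shape profile, along with the interaction with the heavy parts, is where I expect the bulk of the technical work to lie.
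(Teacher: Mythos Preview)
Your approach is genuinely different from the paper's. The paper never slices into two-dimensional Gale--Ryser problems; instead it proves a purely deterministic sufficient condition (Theorem~\ref{th:deterministic}): any triple $(\lambda,\mu,\nu)\vdash n$ satisfying the shape hypothesis \eqref{eq:shape-assumptions} lies in $\cB$. The construction uses three ingredients---monotonicity of $\cB$ under the dominance order, a gluing lemma, and solid cubes---and proceeds by first excising all mass above $B\sqrt n$ via a single square, then removing the bulk with iterated cubes, and finally handling the remaining flat partitions by an inductive halving argument. Theorem~\ref{th:main} then follows in one line from a quantitative limit-shape theorem showing a random partition satisfies \eqref{eq:shape-assumptions} with probability $1-e^{-c\sqrt n}$. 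What this buys is a clean, reusable deterministic criterion with no per-slice probabilistic bookkeeping; the same criterion also yields Theorem~\ref{th:hypergraph} directly.

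Your plan is plausible but the step you flag as the crux is both easier and harder than you describe. It is easier because under proportional allocation the slice marginals are extremely flat as partitions of $\nu_k$: one has $r^{(k)}_1\approx \lambda_1\nu_k/n = O(\log^2 n)\ll \sqrt{\nu_k}$, and an elementary argument shows that whenever $r,c\vdash m$ with $r_1,c_1\le\sqrt m$ the Gale--Ryser inequality $r\preceq c'$ holds deterministically. So no Chernoff bound and no union bound over $\Theta(\sqrt n\log n)$ slices is actually needed, and the Pittel obstruction you worry about does not bite here. It is harder because the randomized allocation is then doing no real work: the genuine difficulty is the exact integer bookkeeping, namely arranging $\sum_k r^{(k)}_i=\lambda_i$, $\sum_k c^{(k)}_j=\mu_j$, and $\sum_i r^{(k)}_i=\sum_j c^{(k)}_j=\nu_k$ simultaneously while keeping every $r^{(k)}_1,c^{(k)}_1\le\sqrt{\nu_k}$. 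Your ``fix residuals with size-one parts'' step is too vague to cover this three-way coupling, and the heavy-phase placement is not obviously compatible with it. The paper's dominance-monotonicity machinery is designed precisely to avoid this kind of exact accounting.
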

	As in the case of \cref{th:hypergraph}, this theorem is sharp up to the value of $c > 0$ and is algorithmic with polynomial running time.
	
	\cref{th:main} resolves a conjecture of Pak and Panova \cite{PakP} from a work connecting contingency tables to the \emph{Kronecker coefficients}.   Kronecker coefficients are central objects of study of algebraic combinatorics and representation theory of the symmetric group.  While they are non-negative integers, there is no known combinatorial or geometric description of the Kronecker coefficients.  Further, the problem of deciding whether a Kronecker coefficient is non-zero is an NP-hard problem \cite{ikenmeyer2017}.  We refer the reader to a survey by Pak, Panova, and Vallejo \cite{pak2015combinatorics} for more information on the Kronecker coefficients.   
	
	Pak and Panova \cite{PakP} proved upper bounds on Kronecker coefficients in terms of number of three-dimensional binary contingency tables along with a lower bound in terms of the number of such tables that form a pyramid shape.  More formally, we say that a three-dimensional binary contingency table $M$ is a \emph{pyramid} if for each $i_1 \leq i_2, j_1 \leq j_2, k_1 \leq k_2$ we have $$M(i_1,j_1,k_1) \geq M(i_2,j_2,k_2)\,.$$
	
	Let $T(\la,\mu,\nu)$ denote the number of three-dimensional binary contingency tables with marginals $\la,\mu,\nu$ and $\mathrm{Pyr}(\la,\mu,\nu)$ denote the number of pyramids with marginals $\la,\mu,\nu$.  Pak and Panova showed that the Kronecker coefficient $g(\la,\mu,\nu)$ satisfies \begin{equation}\label{eq:kronecker-sandwich}
		\mathrm{Pyr}(\la,\mu,\nu)\leq g(\la,\mu,\nu)\leq T(\la,\mu,\nu)\,.
	\end{equation}  
	Further, Pak and Panova conjectured \cite[Conj. 8.1]{PakP} that if $(\lambda,\mu,\nu)$ are chosen uniformly at random then $T(\la,\mu,\nu) \geq 1$ and $\mathrm{Pyr}(\la,\mu,\nu) = 0$ with high probability.  \cref{th:main} resolves the first of these conjectures; we also confirm their conjecture on realizability of pyramids:
	
	\begin{theorem} \label{prop:pyramid-limit}
		Let $\lambda,\mu,\nu$ be integer partitions of $n$ chosen independently and uniformly at random.  Then there is a constant $C > 0$ so that $$\P(\mathrm{Pyr}(\lambda,\mu,\nu) > 0) \leq C n^{-0.003}.$$    
	\end{theorem}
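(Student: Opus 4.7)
The plan is to identify a simple deterministic obstruction to the existence of a pyramid and then show this obstruction fails with only polynomially small probability under the uniform measure. Suppose $Y \subset \Z_{>0}^3$ is a pyramid with marginals $(\lambda, \mu, \nu)$. The slice $Y_1 := \{(j,k) : (1,j,k) \in Y\}$ is a $2$D Young diagram of size $\lambda_1$, and by down-closedness of $Y$ it contains the L-shape formed by the two axis arms $\{(j,1) : 1 \leq j \leq \ell(\mu)\}$ and $\{(1,k) : 1 \leq k \leq \ell(\nu)\}$. Indeed, $(1,j,1) \in Y$ iff $\mu_j \geq 1$, and similarly $(1,1,k) \in Y$ iff $\nu_k \geq 1$. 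The two arms share only the corner $(1,1)$, so the L-shape contains $\ell(\mu) + \ell(\nu) - 1$ cells, giving the deterministic inequality
$$\lambda_1 \geq \ell(\mu) + \ell(\nu) - 1.$$
It thus suffices to show $\P(\lambda_1 \geq \ell(\mu) + \ell(\nu) - 1) \leq C n^{-0.003}$.

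For the probabilistic step I would invoke classical asymptotics for uniform random integer partitions (Erd\H{o}s--Lehner, Fristedt): for $\lambda$ uniform over partitions of $n$, both $\lambda_1$ and $\ell(\lambda)$ have means $\tfrac{\sqrt{6n}}{2\pi}\log n + O(\sqrt{n})$, exhibit Gumbel-type fluctuations on the scale $\sqrt{n}$, and are asymptotically independent. Fix a threshold $T = (1+\delta)\tfrac{\sqrt{6n}}{2\pi}\log n$ slightly above the typical value, for a small constant $\delta > 0$ to be chosen. Since $\ell(\mu) + \ell(\nu)$ concentrates near $2 \cdot \tfrac{\sqrt{6n}}{2\pi}\log n$, which exceeds $T$ provided $\delta < 1$, we have the containment
$$\{\lambda_1 \geq \ell(\mu) + \ell(\nu) - 1\} \subset \{\lambda_1 \geq T\} \cup \{\ell(\mu) + \ell(\nu) \leq T + 1\}.$$
A Gumbel right-tail bound yields $\P(\lambda_1 \geq T) \leq C n^{-\delta \pi/\sqrt{6}}$, while the doubly-exponentially thin left tails of $\ell(\mu)$ and $\ell(\nu)$ make the second event super-polynomially small. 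Picking $\delta$ with $\delta \pi/\sqrt{6} \geq 0.003$ (i.e., $\delta \gtrsim 0.0024$) then closes the argument.

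The main obstacle will be rigorously controlling the upper tail of $\lambda_1$ at the large-deviation scale $t \sim \log n$, rather than the constant scale at which the Gumbel limit theorem is usually stated. The cleanest route is via Fristedt's grand-canonical ensemble --- where parts of different sizes are independent geometric random variables and the tail estimate on $\lambda_1$ is immediate from a product over $k$ --- followed by transferring back to the uniform measure on partitions of $n$ via a local limit theorem / de-Poissonization argument. The transfer introduces polynomial-in-$n$ factors (notably $\P(|\lambda| = n) \asymp n^{-3/4}$, compounded by the requirement that all three of $|\lambda|, |\mu|, |\nu|$ equal $n$), and careful accounting of these factors is likely why the explicit exponent $0.003$ is so much smaller than the naive heuristic $1/2$ that emerges from the Gumbel-tail calculation alone.
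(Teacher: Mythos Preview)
Your deterministic obstruction is correct and the plan works, but it is a genuinely different route from the paper's. The paper's key lemma (\cref{lem:pyramid}) shows that any pyramid with marginals $(\lambda,\mu,\nu)$ forces the full dominance relation $\nu' \preceq \lambda$; the proof of \cref{prop:pyramid-limit} is then the one-line reduction $\P(\Pyr(\lambda,\mu,\nu)>0)\leq \P(\nu'\preceq\lambda)=O(n^{-0.003})$, quoting \cite{MMM}. Your L-shape argument extracts instead the single scalar inequality $\lambda_1 \geq \ell(\mu)+\ell(\nu)-1$ and bounds its probability via Erd\H{o}s--Lehner/Fristedt tail estimates. Your approach is more elementary and self-contained (it avoids the delicate \cite{MMM} estimate), and if carried through it actually yields a \emph{better} exponent: the direct count $\P_n(\lambda_1\geq T)\leq \sum_{k\geq T}p(n-k)/p(n)\lesssim \sqrt{n}\,e^{-\pi T/\sqrt{6n}}$ already gives $n^{-\delta/2}$ with no de-Poissonization, so taking $\delta$ close to $1$ approaches $n^{-1/2}$. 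Your closing speculation is therefore off: the $0.003$ is not a de-Poissonization artifact in an argument like yours, but is simply the exponent inherited from \cite{MMM} for the harder event $\{\nu'\preceq\lambda\}$ that the paper reduces to. The only piece of your sketch requiring care is the non-asymptotic lower-tail bound $\P_n\bigl(\ell(\mu)\leq (1-\eps)\tfrac{\sqrt{6n}}{2\pi}\log n\bigr)=o(n^{-c})$; this is standard (via conjugation it becomes an upper bound on $p_{\leq L}(n)/p(n)$), but it is not literally a consequence of the Gumbel limit theorem and should be stated as a finite-$n$ estimate.
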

	
	Pak and Panova conjecture \cite[Conj. 8.2]{PakP} that if $(\lambda,\mu,\nu)$ are chosen uniformly at random then $g(\lambda,\mu,\nu) > 0$ with high probability;  \cref{th:main} and \cref{prop:pyramid-limit} show that the bounds in \eqref{eq:kronecker-sandwich} are not strong enough to confirm or refute this conjecture.
	
	\subsection{A deterministic criterion}
	
	Both of \cref{th:hypergraph} and \cref{th:main} will be special cases of a deterministic criterion we prove for the realizability of three-dimensional binary contingency tables.  Intuitively, one obstacle to constructing a binary contingency table is to have many large parts and few small parts.  As an extreme example, one can easily see that if we take $\lambda = \mu = \nu = (n)$ then there is no binary contingency table with marginals $(\lambda,\mu,\nu)$.  
	
	We think of integer partitions $\lambda = (\lambda_1,\lambda_2,\ldots)$ as non-increasing sequences of non-negative integers and write $\lambda \vdash n$ if $\sum_j \lambda_j = n$.  We define the set $\cB$ to be the set of triples $(\lambda,\mu,\nu)$ for which there exists a three-dimensional binary contingency table with marginals $(\lambda,\mu,\nu).$

	\begin{definition}
		We say that $\lambda \vdash n$ satisfies \eqref{eq:shape-assumptions} with parameters $(\theta,A,B)$ if 
		\begin{align}
			\sum_{j} \lambda_j \one_{\lambda_j > B \sqrt{n}} \leq \frac{n}{2 A^2}  \qquad \text{ and } \qquad \label{eq:shape-assumptions}
			\sum_{j} \lambda_j \one_{\lambda_j \leq  \sqrt{n}/A} \geq  \theta \frac{n}{A} \,. 
		\end{align}
	\end{definition}
	
	Broadly, this definition asserts that not too much of the partition's ``mass'' comes from large parts and that much of its mass comes from relatively small parts.  Our main technical theorem is that if all three partitions satisfy \eqref{eq:shape-assumptions} then we can construct a $3$-dimensional binary contingency table. 
	
	\begin{theorem}\label{th:deterministic}
		For each $\theta \in (0,1)$ there is an $A_0= A_0(\theta)$ so that the following holds.  For all $B >0$ and $A \geq A_0$ there is an $n_0(\theta,A,B) < \infty$ so that if $n \geq n_0$ and $\lambda,\mu,\nu \vdash n$  all satisfy \eqref{eq:shape-assumptions} with parameters $(\theta,A,B)$ then $(\lambda,\mu,\nu) \in \cB$.
	\end{theorem}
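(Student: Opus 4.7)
The plan is a two-stage construction that first handles the \emph{huge} parts of each partition (those exceeding $B\sqrt{n}$) and then solves the residual contingency problem on the non-huge block. Write $L_i$ for the indices of huge parts and $S_i$ for the indices of \emph{small} parts (those of value at most $\sqrt{n}/A$) in the $i$-th partition. By \eqref{eq:shape-assumptions} the huge-part mass is at most $n/(2A^2)$ in each partition while the small-part mass is at least $\theta n/A$, so their ratio is at most $1/(2A\theta)$: whenever $A \gg 1/\theta$ the huge mass is negligible compared with the small-part ``capacity,'' and this slack is what drives the choice of $A_0(\theta)$.

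In Stage 1, for each $i \in L_1$ I place exactly $\lambda_i$ ones inside the block $\{i\}\times S_2 \times S_3$. I set the row and column marginals of the 2D slice at $\{i\}$ to be $m_{ij} := \lambda_i \widetilde{\mu}_j / M$ and $n_{ik} := \lambda_i \widetilde{\nu}_k / M$, where $M := \sum_{i'\in L_1}\lambda_{i'}$ and $\widetilde{\mu}_j := \mu_j \cdot M / \sum_{j'\in S_2}\mu_{j'} \leq \mu_j/(2A\theta)$ (and analogously for $\widetilde{\nu}_k$). These aggregate, when summed over $i\in L_1$, to the prescribed totals $\widetilde{\mu}_j$ and $\widetilde{\nu}_k$; each slice is realizable by 2D Gale--Ryser, since its row and column sums are much smaller than $|S_2|,|S_3| \geq \theta\sqrt{n}$. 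Symmetric constructions handle the huge parts of $\mu$ and $\nu$ using the ``other two'' small-index sets. Because the three huge-part contributions consume only $O(n/A^2)$ mass in total, the residual budgets at every index stay nonnegative.

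Stage 2 reduces to the residual 3D contingency problem, where every marginal is bounded by $B\sqrt{n}$ and supported on the non-huge blocks. I proceed by slicing: for each depth $k$, define per-slice row and column marginals $a_i^{(k)} \approx \lambda_i^{\mathrm{res}} \nu_k^{\mathrm{res}}/n$ and $b_j^{(k)} \approx \mu_j^{\mathrm{res}} \nu_k^{\mathrm{res}}/n$ with integer corrections, then realize each slice as a binary matrix via 2D Gale--Ryser. The main obstacle lies here: the integer corrections must be chosen so that (i) they aggregate to the correct global marginals $\lambda^{\mathrm{res}}, \mu^{\mathrm{res}}, \nu^{\mathrm{res}}$, and (ii) the 2D Gale--Ryser inequality holds simultaneously in every slice. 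Both are plausible because \eqref{eq:shape-assumptions} guarantees at least $\theta\sqrt{n}$ nonzero parts in every direction and that all individual parts are at most $B\sqrt{n}$, so each slice is extremely sparse with substantial Gale--Ryser slack; but propagating the rounding corrections consistently across $\Theta(\sqrt{n})$ slices is the technical heart of the argument.
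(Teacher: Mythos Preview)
Your approach is genuinely different from the paper's, and Stage~2 contains a real gap rather than a routine detail to be filled in.

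The concrete obstruction is this. You need nonnegative integers $a_i^{(k)},b_j^{(k)}$ satisfying the three aggregation identities $\sum_k a_i^{(k)}=\lambda_i^{\mathrm{res}}$, $\sum_k b_j^{(k)}=\mu_j^{\mathrm{res}}$, $\sum_i a_i^{(k)}=\sum_j b_j^{(k)}=\nu_k^{\mathrm{res}}$, \emph{and} the binary Gale--Ryser condition in every slice~$k$. Nothing in \eqref{eq:shape-assumptions} prevents $\nu$ from having $\Theta(\sqrt n)$ parts equal to, say,~$2$; for such a slice your proportional values $\lambda_i^{\mathrm{res}}\nu_k^{\mathrm{res}}/n$ are all $O(1/\sqrt n)$, and a rounding scheme driven by the global constraint $\sum_k a_i^{(k)}=\lambda_i^{\mathrm{res}}$ can easily force one of these slices to carry $a_{i_0}^{(k)}=2$, $b_{j_0}^{(k)}=2$ with all other entries zero---which is not realizable as a $\{0,1\}$ matrix. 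Avoiding this simultaneously across $\Theta(\sqrt n)$ thin slices while preserving all three exact aggregation identities is a nontrivial combinatorial design problem; you name it as ``the technical heart'' but supply no mechanism, and there is no off-the-shelf lemma that delivers it. (Stage~1 has the same issue in milder form: $\widetilde\mu_j$ and $m_{ij}$ are not integers as written.)

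For contrast, the paper never touches real-valued allocation or per-slice Gale--Ryser. It first uses dominance-order monotonicity of $\cB$ (\cref{lem:dominance-to-cont}) to replace $(\lambda,\mu,\nu)$ by a single common partition, then repeatedly \emph{increases} that partition in dominance so that its top $m$ parts equal $m^2$ and peels off an $m\times m\times m$ all-ones cube via \cref{lem:split} and \cref{fact:cube}. Iterating drives the maximal part size down until \cref{prop:partition-wide} (and before that \cref{prop:meat}) applies. Every step is an exact integer block removal, so no rounding or slice-feasibility check ever arises; the slack in \eqref{eq:shape-assumptions} is spent on having enough parts to assemble the next cube rather than on Gale--Ryser inequalities.
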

	
	We will combine  \cref{th:deterministic} with a sufficiently strong shape theorem (see \cref{th:shape} and \cref{cor:limit-shape}) about random partitions in order to deduce both  \cref{th:hypergraph} and \cref{th:main} from \cref{th:deterministic}.

	\subsection{Additional related work}
	
	The starting points for deterministic criterion for the graph realization problem are the foundational algorithms of Havel-Hakimi \cite{havel1955remark,hakimi1962realizability} and of Erd\H{o}s-Gallai \cite{ErdosGallai1960}.  In the case of the bipartite graph realization problem---or equivalently the problem of realizability for a binary (two-dimensional) contingency table with prescribed marginals---the Gale-Ryser theorem provides an easy-to-check condition.  In particular, there exists a two-dimensional contigency table with marginals $\lambda$ and $\mu$ if and only if the transpose of $\lambda$ dominates $\mu$ (see  \cref{sec:prelims} for more on the dominance order).  This condition bears quite a bit of similarity to the Erd\H{o}s-Gallai condition.  Using these two similar criteria, Pittel proved in 1999 \cite{PITTEL1999123} that with high probability a random partition is not the degree sequence of a graph and that given two random partitions there is no (two-dimensional) binary contingency table with those margins.  This simultaneously resolved conjectures of Wilf and Macdonald, respectively.
	
	In his first work on the problem, Pittel did not prove a quantitative rate of convergence for this probability.  In a later work, Pittel \cite{pittel2018} proved a quantitative rate of decay that decreased slower than any polynomial.  Conversely, Erd\H{o}s and Richmond proved in 1993 \cite{erdos1993} that a random partition of $2n$ is the degree sequence of a graph with probability at least $\Omega(n^{-1/2})$.  A polynomial upper bound of $n^{-0.003}$ on this probability was subsequently shown by Melczer, Michelen and Mukherjee \cite{MMM}.  A recent work by Balister, Donderwinkel, Groenland, Johnston, and Scott \cite{balister2023counting} identifies the probability that a random sequence is the degree sequence of a graph where here the notion of ``random sequence'' is of the form $n-1 \geq d_1 \geq d_2 \geq \cdots \geq d_n \geq 0$; we note that this has a different distribution than allowing $(d_j)_{j}$ to be a random partition.

	\subsection{Structure of the proof}
	
	Our starting point for \cref{th:deterministic} is to show a basic monotonicity property: if $(\lambda,\mu,\nu) \in \cB$---i.e.\ if there is a binary contingency table with marginals $(\lambda,\mu,\nu)$---and one decreases any of the partitions in the dominance order to obtain $(\tlambda,\tmu,\tnu)$, then $(\tlambda,\tmu,\tnu) \in \cB$.  This is shown in  \cref{sec:prelims}. This will allow us to combine parts of our partition and move pieces to larger parts in order to make the partitions look more alike each other.  
	
	For most of the proof we will remove pieces of our partitions in chunks either by creating cubes or a large square in our binary contingency table.  If we create a cube of side length $a$ for some integer $a$, then for each of the three marginals this amounts to removing $a$ parts of size $a^2$.  If we create a square of side length $a$, then one partition will remove one part of size $a^2$ while the others will remove $a$  parts of size $a$.
	
	To start, we will take away from the largest parts of each partition, namely those of size larger than $B\sqrt{n}$ for some large constant $B$.  This will be done by combining all parts larger than $B \sqrt{n}$ into a single part that is equal to a perfect square and subsequently placing a square in the contingency table.  Under the assumptions of  \cref{th:deterministic}, there will not be too much mass larger than $B \sqrt{n}$ and there will be sufficient mass consisting of small parts, which will allow us to place the desired square in the contingency table.  This step is done in  \cref{sec:large-parts}.
	
	Our next step is to remove the majority of the mass of our partitions.  This is done by iteratively placing cubes in our contingency table while keeping enough small parts to make sure the last stage is still feasible.  This is accomplished in  \cref{sec:no-large}.  
	
	The last step begins once the partition has only small parts i.e.\ once it can be dominated by a rectangle of sufficiently small aspect ratio.  To handle this stage, we may assume that each partition is a rectangle due to the monotonicity of realizability with respect to the dominance order.  Intuitively, in this stage we wish to induct on the size of the partition.  For rectangles, this amounts to decreasing the size of the largest part.  We perform this step by iteratively moving portions of the partition from smaller parts to larger parts---thus increasing the dominance order---and placing cubes into our contingency table.  This has the effect of reducing the size of many of the parts of our partition.  We repeat this step until all remaining parts have shrunk and show that the resulting partition has an even smaller aspect ratio than we began with, thus completing our inductive step.  This is accomplished in \cref{sec:wide}.
	
	We note that we prove  \cref{th:deterministic} in reverse order.  The reason for this is that our approach can be understood by building up a broader description of what set of triples lie in the set $\cB$: first partitions with only small parts, then partitions with no exceptionally large parts, and then partitions satisfying  \cref{th:deterministic}.   We deduce  \cref{th:main} from  \cref{th:deterministic} using a sufficiently strong version of the shape theorem in  \cref{sec:shape-thm}.  
	
	\cref{th:hypergraph} is deduced from  \cref{th:deterministic} (along with the shape theorem) in  \cref{sec:hypergraphs} after proving a some small preliminary statements on hypergraph realizability such as an analogous statement about monotonicity with respect to the dominance order.  In particular, to show the existence of a hypergraph with a given degree seqeuence, we in fact will construct a tripartite hypergraph, i.e.\ a three uniform hypergraph whose vertex set is partitioned into three sets so that each hyperedge contains exactly one vertex from each set.  This is because a three-dimensional binary contingency table can be understood as an adjacency matrix of a tripartite graph.
	
	Finally, we prove  \cref{prop:pyramid-limit} in  \cref{sec:pyramid} by proving that in a pyramid one must have that the transpose of one partition is dominated by the other two partitions.  We then lean on previous work \cite{PITTEL1999123,MMM} to show an upper bound on this probability.

	\section{Preliminaries: building blocks for contingency tables} \label{sec:prelims}
	
	An \emph{integer partition} $\lambda = (\lambda_1,\lambda_2,\ldots)$ is a non-increasing sequence of non-negative integers with finite sum.  We write $\lambda \vdash n$ if $\sum_{j} \lambda_j = n$.  We write $|\lambda| = n$ and $\ell(\lambda) = r$ if $\lambda_r > 0$ and $\lambda_{r+1} = 0$.   We write $\lambda'$ for the \emph{conjugate} of $\lambda$.

	Define $$\mathcal{M} = \left\{ M: \mathbb{N}^3 \to \{0,1\} : \sum_{i,j,k} M(i,j,k)  < \infty \right\}  $$
	to be the set of three-dimensional binary contingency tables.  Define $$\cB = \left\{(\lambda,\mu,\nu) : \exists~M \in \mathcal{M} \text{ with marginals }(\lambda,\mu,\nu)\right\}\,.$$
	
	Throughout the proof, we will often be able to assume that $\lambda = \mu = \nu$, and so we write $\lambda \in \cB$ if $(\lambda,\lambda,\lambda) \in \cB$. 
	Given two integer partitions $\lambda, \mu \vdash n$ recall that $\lambda$ \emph{is dominated} by $\mu$ denoted $\lambda \preceq \mu$ if for all $j$ we have 
	\begin{equation}
		\sum_{i = 1}^j \lambda_i \leq \sum_{i = 1}^j \mu_j\,.
	\end{equation}

	We write $\lambda \prec \mu$ if $\lambda \preceq \mu$ and $\lambda \neq \mu$.  In the partially ordered set of partitions of $n$ under dominance, $\mu$ \emph{covers} $\lambda$---denoted $\lambda \precdot \mu$---if $\lambda \prec \mu$ and there is no $\nu$ so that $\lambda \prec \nu \prec \mu$.  In the dominance order, covering amounts to moving one block of the Ferrers diagram from $\lambda$ to one part just larger than it.  
	In particular, we have the following classical fact (see, e.g., \cite[Prop. 2.3]{BRYLAWSKI1973201}).  
	
	\begin{fact}\label{lem:prec}
		For partitions $\lambda,\mu \vdash n$ we have $\lambda \precdot \mu$ if and only if there is some $j$ so that 
		\begin{equation}\label{eq:precdot-def}
			\mu_{j} = \lambda_{j} + 1\,, \qquad \mu_{j+1} = \lambda_{j+1} - 1\,,\qquad \mu_a = \lambda_a \quad \text{for all}\; a \notin \{j,j+1\}\,.
		\end{equation}
	\end{fact}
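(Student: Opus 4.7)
The plan is to use the partial-sum characterization of the dominance order: setting $S_k^\lambda := \sum_{i=1}^k \lambda_i$, we have $\lambda \preceq \mu$ if and only if $S_k^\lambda \le S_k^\mu$ for every $k$. Both directions of the equivalence then follow from careful analysis of these partial sums.

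For the forward (``if'') direction, suppose $\mu$ and $\lambda$ are related as in \eqref{eq:precdot-def}. A direct computation gives $S_k^\mu - S_k^\lambda = \one[k = j]$, so $\lambda \prec \mu$. If $\nu \vdash n$ satisfies $\lambda \preceq \nu \preceq \mu$, then its partial sums are integer-valued and sandwiched between two sequences that agree everywhere except at index $k = j$, where they differ by exactly $1$. Hence $S_k^\nu = S_k^\lambda$ for $k \ne j$ and $S_j^\nu \in \{S_j^\lambda, S_j^\mu\}$, forcing $\nu \in \{\lambda, \mu\}$. Therefore no strict intermediate partition exists and $\lambda \precdot \mu$.

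For the reverse direction, I would proceed by contrapositive: assume $\lambda \prec \mu$ but that $\mu$ does not arise from $\lambda$ by any such adjacent move, and produce a partition $\nu \vdash n$ with $\lambda \prec \nu \prec \mu$. Let $a = \min\{k : S_k^\mu > S_k^\lambda\}$ and $b = \max\{k : S_k^\mu > S_k^\lambda\}$; these exist since $\lambda \prec \mu$. Minimality of $a$ gives $\mu_a > \lambda_a$, and maximality of $b$ gives $\mu_{b+1} < \lambda_{b+1}$. If $a = b$ and $S_a^\mu - S_a^\lambda = 1$, then $\mu$ and $\lambda$ agree outside $\{a, a+1\}$ and differ by $\pm 1$ at those two positions, which is precisely the stated form with $j = a$. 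Otherwise either $a < b$ or $S_a^\mu - S_a^\lambda \ge 2$, creating slack for building an intermediate.

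The main obstacle is ensuring the candidate $\nu$ is actually a non-increasing sequence (a valid partition) while remaining strictly between $\lambda$ and $\mu$. The construction builds $\nu$ from $\mu$ by moving a single box ``downward'': decrease $\mu$ by $1$ at a row in $[a,b]$ and increase $\mu$ by $1$ at a later row in $[a+1,b+1]$. To guarantee non-increasingness I would select the source row at a position where $\mu$ strictly decreases to the next row and select the target row just above a position where $\mu$ strictly decreases; the hypothesis that $\mu$ is not an adjacent swap of $\lambda$ (i.e., that we are in the ``otherwise'' case) provides enough room for such a choice, via a short case analysis on whether the surplus is concentrated at $a$ or spread across $[a,b]$. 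The partial sums of the resulting $\nu$ then lie strictly between those of $\lambda$ and $\mu$ at the perturbed indices, giving $\lambda \prec \nu \prec \mu$ and contradicting $\lambda \precdot \mu$, which completes the proof.
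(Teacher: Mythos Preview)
The paper does not supply a proof of this fact; it merely cites Brylawski. More importantly, the biconditional as stated is \emph{false} in the ``only if'' direction, so that half of your argument cannot be completed. Take $\lambda=(2,2,2)$ and $\mu=(3,2,1)$: their partial sums are $(2,4,6)$ and $(3,5,6)$, and a direct check over the partitions of~$6$ shows that no partition lies strictly between them, so $\lambda\precdot\mu$; yet $\lambda$ and $\mu$ differ at positions $1$ and $3$, not at two consecutive indices. Brylawski's actual characterization (his Prop.~2.3) is that $\lambda\precdot\mu$ if and only if there exist $i<k$ with $\mu_i=\lambda_i+1$, $\mu_k=\lambda_k-1$, all other parts equal, and \emph{either} $k=i+1$ \emph{or} $\lambda_i=\lambda_k$. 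The version recorded in the paper omits the second alternative.

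Your ``if'' direction is correct and the partial-sum sandwich argument is clean. The gap is entirely in the ``only if'' direction. In the counterexample above your parameters are $a=1$ and $b=2$, so you land in your ``otherwise'' branch and try to move a single box of $\mu=(3,2,1)$ downward within rows $1$ through $3$; every such move either destroys monotonicity (giving $(2,3,1)$ or $(3,1,2)$) or lands exactly on $\lambda=(2,2,2)$. The ``short case analysis'' you defer to would have to handle precisely this configuration and cannot. The paper's downstream uses of the fact are easily repaired with the correct Brylawski statement, but the biconditional you were asked to prove is not a theorem.
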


	One of our main tools will be that decreasing a partition in the dominance order can only make it easier to construct a contingency table.  
	
	\begin{lemma} \label{lem:dominance-to-cont}
		For $j \in \{1,2\}$ let  $\lambda^{(j)},\mu^{(j)},\nu^{(j)} \vdash n$ and suppose $\lambda^{(1)} \preceq \lambda^{(2)}, \mu^{(1)} \preceq \mu^{(2)}$ and $\nu^{(1)} \preceq \nu^{(2)}$.  If $(\lambda^{(2)},\mu^{(2)},\nu^{(2)}) \in \mathcal{B}$ then $(\lambda^{(1)},\mu^{(1)},\nu^{(1)})\in\mathcal{B}$.
	\end{lemma}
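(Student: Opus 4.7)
The plan is to reduce to the case where two of the three partitions are equal and the third is decreased by a single cover move, and then to perform an explicit toggle in the contingency table. Since $\preceq$ on partitions of $n$ is the reflexive-transitive closure of the cover relation $\precdot$, we can find a chain $\lambda^{(1)} = \sigma_0 \precdot \sigma_1 \precdot \cdots \precdot \sigma_r = \lambda^{(2)}$ and analogous chains for $\mu$ and $\nu$. Applying the lemma one link at a time along these chains reduces the problem to the case where $\lambda^{(1)} \precdot \lambda^{(2)}$, while $\mu^{(1)} = \mu^{(2)} = \mu$ and $\nu^{(1)} = \nu^{(2)} = \nu$.

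For this reduced case, \cref{lem:prec} produces an index $j$ with $\lambda^{(2)}_j = \lambda^{(1)}_j + 1$, $\lambda^{(2)}_{j+1} = \lambda^{(1)}_{j+1} - 1$, and $\lambda^{(2)}_a = \lambda^{(1)}_a$ for $a \notin \{j,j+1\}$. Because $\lambda^{(1)}$ is itself a partition we must have $\lambda^{(1)}_j \geq \lambda^{(1)}_{j+1}$, and hence $\lambda^{(2)}_j \geq \lambda^{(2)}_{j+1} + 2$. Let $M \in \mathcal{M}$ realize $(\lambda^{(2)},\mu,\nu)$. Since $\sum_{k,l} M(j,k,l) = \lambda^{(2)}_j > \lambda^{(2)}_{j+1} = \sum_{k,l} M(j+1,k,l)$, a pigeonhole argument yields a pair $(k,l)$ with $M(j,k,l) = 1$ and $M(j+1,k,l) = 0$. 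Define $M' \in \mathcal{M}$ to agree with $M$ everywhere except at these two entries, which are swapped: $M'(j,k,l) = 0$ and $M'(j+1,k,l) = 1$. For each fixed $(k,l)$ the column sum $\sum_i M(i,k,l)$ is unchanged, so the $\mu$- and $\nu$-marginals of $M'$ equal those of $M$. The row sums of $M'$ agree with $\lambda^{(2)}$ except that the entry at $j$ drops by one and the entry at $j+1$ grows by one, producing the marginal $\lambda^{(1)}$. Hence $(\lambda^{(1)}, \mu, \nu) \in \cB$, as required.

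The only obstacle is producing the swap pair $(k,l)$, which is immediate from the gap of at least $2$ between consecutive slice sums of $M$; everything else is bookkeeping to confirm the toggle preserves the untouched marginals. I expect no further technical complications.
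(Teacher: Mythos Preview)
Your proof is correct and follows essentially the same approach as the paper: reduce by transitivity and symmetry to a single cover step $\lambda^{(1)}\precdot\lambda^{(2)}$ with $\mu,\nu$ fixed, invoke \cref{lem:prec} to locate the adjacent indices, use the strict inequality between the two slice sums to find a position where $M$ has a $1$ above a $0$, and swap those two entries. The paper's argument differs only in notation and in the level of detail with which the marginal computations are written out.
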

	\begin{proof}
		It is sufficient to assume that $\mu=\mu^{(1)}=\mu^{(2)}$, $\nu=\nu^{(1)}=\nu^{(2)}$, and $\la^{(1)}\preceq \la^{(2)}$, i.e., only the partitions $\la^{(1)}$ and $\la^{(2)}$ differ. 
		We may also assume without loss of generality that $\la^{(1)}\precdot\la^{(2)}$. 
		
		Let $M$ be a contingency table with marginals $(\la^{(2)},\mu, \nu)$. Since $\la^{(1)} \precdot \la^{(2)}$, \cref{lem:prec} shows there is an $i_0$ such that $$\la^{(1)}_{i_0}+1=\la^{(2)}_{i_0}, \;\; \la^{(1)}_{i_0+1}-1=\la^{(2)}_{i_0+1},\; \mathrm{and}\; \la_i^{(1)}=\la_i^{(2)}, \; \forall i\notin\{i_0,i_0+1\}. $$ 
		
		Since the marginals of $M$ are $(\la^{(2)},\mu, \nu)$, we have that $$\lambda^{(2)}_{i_0} = \sum_{j,k}M(i_0,j,k), \qquad \lambda^{(2)}_{i_0+1}= \sum_{j,k} M(i_0+1,j,k)\,.$$
		Note that $\lambda^{(2)}_{i_0} = 1 + \lambda_{i_0}^{(1)} \geq 1 + \lambda_{{i_0}+1}^{(1)} = 2 + \lambda_{{i_0}+1}^{(2)}$ and so in particular we have $\lambda^{(2)}_{i_0} > \lambda^{(2)}_{i_0+1}$.
		
		Thus we must have some pair of indices $j_0,k_0$ for which we have $M(i_0,j_0,k_0) =1$ and \\ 
		$M(i_0+1,j_0,k_0)=0$.  Define the table $M'$ by $$M'(i_0,j_0,k_0) = 0, \qquad M'(i_0+1,j_0,k_0) = 1$$ 
		$$\text{and}\quad M'(i,j,k) = M(i,j,k) \text{ for all }(i,j,k) \notin \{(i_0,j_0,k_0), (i_0+1,j_0,k_0)\}\,.  $$
		
		We then may compute the marginals of $M'$ to be $$ \sum_{i,k} M'(i,j,k) = \sum_{i,k} M(i,j,k) = \mu_j\,,\qquad \sum_{i,j} M'(i,j,k) = \sum_{i,j} M(i,j,k) = \nu_k $$
		$$\sum_{j,k} M'(i,j,k) = \sum_{j,k} M(i,j,k) = \lambda^{(2)}_i = \lambda^{(1)}_i \quad \text{ for }i \notin \{i_0,i_0+1\}    $$
		$$\sum_{j,k} M'(i_0,j,k) = \sum_{j,k} M(i_0,j,k) - 1 = \lambda_{i_0}^{(2)}-1 = \lambda_{i_0}^{(1)}$$ 
		$$\sum_{j,k} M'(i_0+1,j,k) = \sum_{j,k} M(i_0+1,j,k) + 1 = \lambda_{i_0+1}^{(2)}-1= \lambda_{i_0+1}^{(1)}\,.$$
		Hence, $(\la^{(1)},\mu,\nu)\in\mathcal{B}$ as desired.
	\end{proof}

	A useful tool will be that if we can split three partitions into two triples that each lie in $\cB$ then the original triple lies in $\cB$ as well.

	\begin{lemma} \label{lem:split}
		Let $\lambda, \mu, \nu \vdash n$.  Suppose there is some $r \geq 1$ so that we have \begin{align*}(\la_-,\mu_-,\nu_-):= \left((\lambda_1,\ldots,\lambda_r), (\mu_1,\ldots,\mu_r), (\nu_1,\ldots,\nu_r) \right) &\in \cB \\
			\text{ and }\quad (\la_+,\mu_+,\nu_+):=((\lambda_{r+1},\lambda_{r+2},\ldots), (\mu_{r+1},\mu_{r+2},\ldots), (\nu_{r+1},\nu_{r+2},\ldots)) &\in \cB\,.
		\end{align*}   Then $(\lambda,\mu,\nu) \in \cB$.
	\end{lemma}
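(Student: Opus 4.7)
The plan is to construct the combined contingency table $M$ by placing the two given tables $M_-$ and $M_+$ into disjoint ``octants'' of $\mathbb{N}^3$ so that their marginals add up along each axis in precisely the right way.

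Concretely, I would let $M_- \in \mathcal{M}$ realize $(\lambda_-,\mu_-,\nu_-)$ and $M_+ \in \mathcal{M}$ realize $(\lambda_+,\mu_+,\nu_+)$, and then define
\begin{equation*}
M(i,j,k) \;=\; \begin{cases} M_-(i,j,k) & \text{if } i,j,k \in \{1,\dots,r\}, \\ M_+(i-r,\,j-r,\,k-r) & \text{if } i,j,k > r, \\ 0 & \text{otherwise.} \end{cases}
\end{equation*}
The key point is that the supports of the two pieces are disjoint and are arranged so that row $i \leq r$ of $M$ sees only the $M_-$ block, while row $i > r$ of $M$ sees only the shifted $M_+$ block (and analogously for $j$ and $k$). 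The remaining ``off-diagonal'' octants are filled with zeros.

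The verification is then a direct computation of the marginals. For $i \leq r$, summing over $j,k$ gives $\sum_{j,k\leq r} M_-(i,j,k) = (\lambda_-)_i = \lambda_i$, since the contribution from the other octants vanishes by construction. For $i > r$, summing over $j,k$ gives $\sum_{j,k > r} M_+(i-r,j-r,k-r) = (\lambda_+)_{i-r} = \lambda_i$. The same argument, with the roles of the coordinates permuted, gives the correct $\mu_j$ and $\nu_k$ marginals. Since $M$ takes values in $\{0,1\}$ and has finite support, $M \in \mathcal{M}$, proving $(\lambda,\mu,\nu) \in \cB$.

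There is no real obstacle here: the statement is essentially a block-diagonal construction in three dimensions, and the only thing to be careful about is ensuring no cross-terms appear in the marginals, which is automatic from the choice to zero out the mixed octants. One could slightly shorten the presentation by introducing shift operators, but a case split on whether each index is $\leq r$ or $> r$ is the cleanest way to write it.
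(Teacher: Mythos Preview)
Your proposal is correct and matches the paper's own proof essentially verbatim: both place $M_-$ and $M_+$ into the two ``diagonal'' octants $\{1,\dots,r\}^3$ and $\{r+1,r+2,\dots\}^3$, zero out the mixed octants, and verify the marginals by the obvious case split on whether the index is $\leq r$ or $> r$. There is nothing to add.
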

	\begin{proof}
		We ``glue" together the two associate contingency tables at their corners. Let $M_-$ and $M_+$ be binary contingency tables with marginals $(\lambda_-,\mu_-,\nu_{-})$ and $(\lambda_+,\mu_+,\nu_+)$.  Then define the binary contingency table $M$ via $$M(i,j,k) = \begin{cases}
			M_-(i,j,k) & \text{ if } i,j,k \leq r \\
			M_+(i-r,j-r,k-r) & \text{ if }i,j,k \geq r+1 \\
			0 & \text{ otherwise }
		\end{cases}\,.$$
		We compute $$\sum_{j,k} M(i,j,k) = \sum_{j \leq r,k \leq r} M_-(i,j,k) = \lambda_i \quad \text{ if } i \leq r$$
		$$ \sum_{j,k} M(i,j,k) = \sum_{j \geq r+1,k \geq r+1} M_+(i-r,j-r,k-r) = \lambda_i \quad \text{ if } i \geq r+1\,.$$
		Computing the analogous marginals for each $j$ and $k$ completes the proof.
	\end{proof}

	Our most basic tool is to construct cubes of various sizes.
	
	\begin{fact}    \label{fact:cube}
		For an integer $m \geq 1$, let $\lambda$ be the partition with $m$ parts so that each part has size $m^2$.  Then $\lambda \in \cB$.
	\end{fact}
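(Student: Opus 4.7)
The plan is to construct the contingency table explicitly: take the indicator function of an $m \times m \times m$ cube sitting in the corner of $\mathbb{N}^3$. Concretely, I would define
\[
M(i,j,k) = \begin{cases} 1 & \text{if } 1 \leq i,j,k \leq m, \\ 0 & \text{otherwise.} \end{cases}
\]
This is clearly an element of $\mathcal{M}$ since $\sum_{i,j,k} M(i,j,k) = m^3 < \infty$ and each entry lies in $\{0,1\}$.

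Next I would verify the marginals. By symmetry in $i,j,k$ it suffices to check one of them. For $i \leq m$,
\[
\sum_{j,k} M(i,j,k) = \sum_{j=1}^m \sum_{k=1}^m 1 = m^2,
\]
and for $i > m$ the sum is $0$. Thus the $\lambda$-marginal has exactly $m$ parts of size $m^2$, matching the hypothesis. The $\mu$- and $\nu$-marginals are identical by symmetry of the construction.

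There is really no obstacle here: the statement is essentially a definitional check, and the only thing to confirm is that the specified partition (namely $m$ parts of size $m^2$) is exactly what the indicator of a symmetric cube produces as each of its three marginals. Hence $(\lambda,\lambda,\lambda) \in \cB$, which by the convention $\lambda \in \cB \iff (\lambda,\lambda,\lambda) \in \cB$ gives the claim.
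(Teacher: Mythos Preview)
Your proof is correct and is exactly the paper's approach: the paper also takes $M(i,j,k)$ to be the indicator of the cube $[m]^3$ and notes that its marginals are $(\lambda,\lambda,\lambda)$. You have simply spelled out the marginal computation in slightly more detail.
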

	\begin{proof}
		The contingency table with $M_{i,j,k} = \delta_{\{i,j,k \in [m]\}}$ has marginals $(\lambda,\lambda,\lambda)$.
	\end{proof}

	\section{Handling partitions with only small parts}
	\label{sec:wide}

	The main result of this section is to show that we can handle partitions whose largest part is fairly small.
	\begin{proposition}\label{prop:partition-wide}
		There is a universal constant $A_0 > 0$ so that the following holds.  Suppose $\lambda,\mu,\nu$ are partitions of $n$ where each has largest part at most $\sqrt{n}/A_0$.  Then $(\lambda,\mu,\nu) \in \mathcal{B}$.
	\end{proposition}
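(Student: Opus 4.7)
My plan is to use the dominance-monotonicity of realizability (\cref{lem:dominance-to-cont}) to reduce to the case where $\lambda=\mu=\nu$ equals a single canonical rectangle-like partition $R$, and then to realize the triple $(R,R,R)$ by an explicit construction built out of ``$3$D permutation lines.''

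First, set $w=\lfloor \sqrt{n}/A_0\rfloor$, $k=\lfloor n/w\rfloor$, and $r=n-kw\in[0,w)$, and let $R=(w^k)$ if $r=0$ and $R=(w^k,r)$ if $r>0$. Since each of $\lambda,\mu,\nu$ has every part bounded by $w$, its $j$-th partial sum is at most $jw$ for $j\le k$ and exactly $n$ for $j\ge k+1$, which in each case matches the $j$-th partial sum of $R$. Hence $\lambda,\mu,\nu\preceq R$, and \cref{lem:dominance-to-cont} reduces the problem to showing $(R,R,R)\in\cB$.

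For the construction, I call a \emph{line of length $m$} any set of cells of the form $\{(i,\sigma(i),\tau(i)):i\in[m]\}$ for injections $\sigma,\tau$; each such line contributes $(1^m,1^m,1^m)$ to the three marginals. I then build the table as the disjoint union of $r$ ``full'' lines of length $k+1$ together with $w-r$ ``short'' lines of length $k$, for which the combined marginals are $(w^k,r)$ in each direction, as desired. Disjointness is arranged by taking cyclic-shift $\sigma_l$'s (with the shift taken modulo the line's own length) and setting $\tau_l=\mathrm{id}$: distinct cyclic shifts guarantee that any two lines of the same length never collide (for each $i$ they differ in the $j$-coordinate), and by choosing the shift sets $\{0,\ldots,r-1\}$ for the full lines and $\{r,\ldots,w-1\}$ for the short lines, one can verify that the remaining cross-class collisions are also avoided.

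The main obstacle is precisely this cross-class disjointness, since the two moduli $k$ and $k+1$ differ and a priori a full and a short line could collide at some common index. A short case analysis on the two wrap-around regimes shows that the only potential collisions require a full shift to exceed a short shift by exactly $1$---a configuration eliminated by the shift sets chosen above. The argument goes through as long as $w\le k$, which follows from the hypothesis $w\le\sqrt{n}/A_0$ together with $k\ge n/w-1$: these yield $k/w\ge A_0^2-1/w\gg 1$ for $A_0$ a sufficiently large universal constant.
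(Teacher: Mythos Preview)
Your argument is correct and takes a genuinely different route from the paper.  The paper proves \cref{prop:partition-wide} by strong induction on the largest part $k$: after dominating up to a near-rectangle with parts equal to $k$, it repeatedly places cubes of side length $m\approx 3\sqrt{k}$ (the operator $T_0$ of \cref{lem:wide-first-step}) to convert most parts of size $k$ into parts of size $\lfloor k/2\rfloor$, then absorbs the $O(\sqrt{k})$ leftover large parts with one further cube (\cref{lem:leftover-k}), and finally invokes the inductive hypothesis at level $\lfloor k/2\rfloor$.  This cube-and-split machinery is exactly what the paper reuses in \cref{sec:no-large} and \cref{sec:large-parts}, and it produces the stated constant $A_0=2^{24}$.

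Your proof is instead a one-shot explicit construction.  After the same dominance reduction to the single partition $R=(w^k,r)$, you realize $(R,R,R)$ directly as a disjoint union of $w$ ``permutation lines'' lying in the diagonal slab $\{(i,j,l):i=l\}$, namely the cells $(i,\sigma_s(i),i)$ for cyclic shifts $\sigma_s$ taken modulo the line's own length.  The four-case wrap-around analysis indeed shows that a full/short collision forces $s-s'\in\{0,1,k+1,-k\}$, all of which are excluded once the full shifts sit in $\{0,\dots,r-1\}$ and the short shifts in $\{r,\dots,w-1\}$ with $w\le k$.  This is considerably shorter and more elementary than the paper's induction (no cubes, no halving), and it gives a far smaller constant: the only requirement is $w\le k$, which holds for any $A_0\ge 2$.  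The trade-off is that your construction is tailored to the symmetric rectangular endpoint and does not obviously feed the later sections, whereas the paper's iterative framework is precisely what drives \cref{prop:meat} and \cref{th:deterministic}.

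One small expository point: in your cross-class disjointness summary you single out the wrap/wrap condition $s=s'+1$, but the no-wrap/no-wrap case gives the condition $s=s'$, which must also be (and is) excluded by your disjoint shift sets.  With that addition the verification is complete.
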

	We will ultimately take $A_0 = 2^{24}$, but this specific value is not particularly important.

	Throughout our proof of  \cref{prop:partition-wide}, we will maintain that all three partitions are equal, and so need only refer to a single partition throughout; this will be possible via \cref{lem:dominance-to-cont} by assuming each of $\lambda,\mu,\nu$ is the maximal partition of $n$ in the dominance order that satisfies $\lambda_1 \leq \sqrt{n}/A_0$.  Our strategy will be to construct many contingency tables that will remove most of the parts of $\lambda$ of size $k$ and decrease them to size $\lfloor k / 2 \rfloor$.  This will handle the vast majority of the parts of size $k$ (provided $k$ is large enough) and so we will just need to decrease the remaining parts of size $k$ in small chunks.

	\subsection{Decreasing most parts of size $k$}  
	
	The goal of this subsection is to reduce most parts of size $k$ to size $\lfloor k / 2 \rfloor$.  
	To set up this step, we say that a partition satisfies \eqref{eq:part-ratio-assumption} if it has \begin{align}
		\text{at most one part with size in } (\lfloor k /2 \rfloor, k) \nonumber \\
		\text{ at most one part with size } < \lfloor k/2 \rfloor, \label{eq:part-ratio-assumption} \tag{$*$}  \\ 
		\text{ all other parts of size }k \text{ or }\lfloor k/2 \rfloor\,.  \nonumber 
	\end{align}

	Our first main step towards  \cref{prop:partition-wide} is to remove most parts of size $k$.
	
	\begin{lemma} \label{lem:wide-first-step}
		Let $\lambda \vdash n$ satisfy $\lambda_1 \leq k \leq \sqrt{n}/A_0$ for some $k \geq 2$.  Then there is a partition $\tlambda$ satisfying \eqref{eq:part-ratio-assumption} with at least $ \frac{8}{9}A_0k- 90\sqrt{k}$ parts of size $\lfloor k/2\rfloor$ and at most $100 \sqrt{k} $ parts  of size $k$; finally, we have that if $\tlambda \in \cB$ then $\lambda \in \cB$.
	\end{lemma}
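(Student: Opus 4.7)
The plan is to build a partition $\lambda^\dagger \vdash n$ that dominates $\lambda$ and that splits cleanly (via \cref{lem:split}) into a ``cube stack'' plus $\tilde\lambda$, so that \cref{lem:dominance-to-cont} gives $\lambda \in \cB$. The key choice is to take the cube side $a = \lceil \sqrt{2k}\rceil$, so the cube parts---of size $K:= a^2 \geq 2k$---are \emph{strictly larger} than every part of $\tilde\lambda$, making the split unambiguous.

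First, using \cref{lem:dominance-to-cont}, I may reduce to the case in which $\lambda$ is the rectangle $(k)^m (r)$, where $m = \lfloor n/k\rfloor$ and $r = n - mk$, since this is the dominance-maximal partition of $n$ whose largest part is $\leq k$. Then I would choose
\[
\tilde\lambda \;=\; (k)^{N_k}\,(x)\,(\lfloor k/2\rfloor)^{N_{k/2}}\,(y)
\]
with $N_{k/2}$ a positive integer near $\tfrac{8}{9} A_0 k$, $N_k$ a nonnegative residual bounded by $100\sqrt k$, and $x \in (\lfloor k/2\rfloor, k)$, $y \in [0, \lfloor k/2\rfloor)$ used as slack parameters, and pick $C \in \N$ so that $aCK + |\tilde\lambda| = n$. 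The slack in $(N_k, x, y)$ provides enough integer flexibility to solve this equation.

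Now set $\lambda^\dagger = (K)^{aC}\,\tilde\lambda$, which is non-increasing because $K \geq 2k > k \geq \tilde\lambda_1$. By \cref{fact:cube} one cube of side $a$ gives $(K)^a \in \cB$, and iterating \cref{lem:split} gives $(K)^{aC} \in \cB$; one more application of \cref{lem:split} at index $aC$, combined with the hypothesis $\tilde\lambda \in \cB$, yields $\lambda^\dagger \in \cB$. The remaining step is the dominance $\lambda \preceq \lambda^\dagger$, which reduces to a partial-sum comparison for $\lambda = (k)^m(r)$. The driving idea is that $\lambda^\dagger$ accumulates a ``head start'' of $aC(K-k)$ over $\lambda$ during its top $aC$ parts (each $K$-part beats a $k$-part by $K-k \geq k$), and then spends it down at rate $\lceil k/2\rceil$ per index during the $\lfloor k/2\rfloor$-tail, so the binding inequality is
\[
aC(K-k) \;\geq\; N_{k/2}\lceil k/2\rceil \,+\, O(k).
\]
Since $aC \approx n/(2k)$ and $K - k \geq k$, the left-hand side is at least $\approx n/2$, while the right-hand side is $\lesssim \tfrac{4}{9} A_0 k^2$; the inequality follows from $n \geq A_0^2 k^2$ for $A_0$ sufficiently large.

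The main obstacle is the bookkeeping: the integer parameters $C, N_k, N_{k/2}, x, y$ must be chosen simultaneously so that (i) the mass balance is exact, (ii) the dominance inequality holds at \emph{every} partial-sum index (not only the critical one at the end of the $\lfloor k/2\rfloor$-tail), and (iii) the count bounds $N_k \leq 100\sqrt k$ and $N_{k/2} \geq \tfrac{8}{9} A_0 k - 90\sqrt k$ both hold. The $\sqrt k$ slack in the stated bounds absorbs the $O(\sqrt k)$ gap between $K$ and $2k$ coming from $a$ being a ceiling, together with the $O(\sqrt k)$-scale perturbations of $N_{k/2}$ forced by integer rounding of $C$ (whose rounding error in mass is at most $aK = O(k^{3/2})$).
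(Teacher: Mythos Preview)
Your approach is correct in outline and genuinely different from the paper's.  The paper does \emph{not} build a single dominating partition $\lambda^\dagger$ in one shot.  Instead it defines an operator $T_0$ that, given a partition satisfying \eqref{eq:part-ratio-assumption}, raises the first $m$ parts to $m^2$ (with $m$ the least integer with $m^2>9k$, so $m\approx 3\sqrt{k}$) by lowering later parts from $k$ to $\lfloor k/2\rfloor$, then peels off that $m$-cube via \cref{lem:split} and \cref{fact:cube}.  Each application deletes $m$ parts and converts at least $8m$ parts; iterating until fewer than $100\sqrt{k}$ parts of size $k$ remain gives the $\tfrac{8}{9}$ ratio directly as (converted)/(converted $+$ deleted).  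Thus in the paper the constant $\tfrac{8}{9}$ is forced by the choice $m^2\approx 9k$, whereas in your construction it is simply a target you impose on $N_{k/2}$ and then verify feasibility.

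What each buys: your one-shot construction is cleaner conceptually---one dominance check, one split---and makes it transparent that the only real constraint is the ``binding inequality'' $aC(K-k)\ge N_{k/2}\lceil k/2\rceil+O(k)$, which is comfortably implied by $n\ge A_0^2k^2$.  The paper's iterative approach avoids the global partial-sum verification entirely (each step only increases in dominance locally and then splits), at the cost of tracking the state through many applications of $T_0$; it also mirrors the structure of the operator $T_1$ used in \cref{sec:no-large}, so there is some economy of exposition.

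Two small points to tighten in your write-up.  First, dominance forces $\ell(\lambda^\dagger)\le \ell(\lambda)$, so you should note that $aC+N_k+N_{k/2}+2\le m+1$; this follows from $aC\le n/(2k)$ and $N_{k/2}\le A_0 k\ll n/(2k)$, but deserves a line.  Second, the slack pair $(x,y)$ with $x\in(\lfloor k/2\rfloor,k)$ and $y<\lfloor k/2\rfloor$ cannot hit a residual congruent to $\lfloor k/2\rfloor\pmod k$; the fix is to absorb that case by bumping $N_{k/2}$ by one, well within the $90\sqrt{k}$ margin.
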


	To prove \cref{lem:wide-first-step} we define an operator $T_0$ on partitions satisfying \eqref{eq:part-ratio-assumption}.  Let $\lambda$ satisfy \eqref{eq:part-ratio-assumption} and set $m$ to be the smallest integer satisfying $m^2 > 9k$.  Note that $m \leq  3\sqrt{k}+1$ 
	and so we have \begin{equation} \label{eq:m-bounds}
		9k \leq m^2 \leq 9k + 6\sqrt{k}+1\,.
	\end{equation}

	We will attempt to make the first $m$ parts of $\lambda$ to be of size $m^2$ by reducing later parts of size $k$ to size $\lfloor k/2\rfloor$ and moving their mass to the the first $m$ parts.   Repeat this process until the first $m$ parts are of size $m^2$.  After this, if there is more than one part with size in $(\lfloor k/2 \rfloor , k)$, then move pieces from the smaller piece to the larger piece until one has either size $k$ or $\lfloor k/2\rfloor$; once again by \cref{lem:prec} this may only increase the partition in the dominance order.

	If such an operation can be performed, then by  \cref{lem:prec} we note that this yields a new partition $\mu$ with  $\lambda \preceq \mu$ and so by  \cref{lem:dominance-to-cont} it will be sufficient to show $\mu \in \mathcal{B}$.  We may remove the first $m$ parts of $\mu$ using  \cref{lem:split} and  \cref{fact:cube} to yield a new partition ${\tlambda}$.  Let $T_0$ be the operator that takes $T_0(\lambda) = \tlambda$.  We first describe the domain of $T_0$:

	\begin{lemma}\label{lem:T-domain}
		If $\lambda$ satisfies \eqref{eq:part-ratio-assumption} then $T_0$ may be applied provided $\lambda$ has at least $100\sqrt{k}$ many parts of size $k$.  
	\end{lemma}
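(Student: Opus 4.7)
The plan is to reduce the claim to a direct bookkeeping check on how much mass $T_0$ needs to move and where that mass can come from. Concretely, there are two things to verify: (a) the top $m$ parts of $\lambda$ are indeed of size $k$ so they are eligible to be raised to $m^2$, and (b) enough additional parts of size $k$ remain so that the required mass can be liberated by reducing them to $\lfloor k/2\rfloor$.

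For (a) I would invoke the structural hypothesis \eqref{eq:part-ratio-assumption}: in non-increasing order $\lambda$ begins with its parts of size $k$, then at most one part of intermediate size, then parts of size $\lfloor k/2\rfloor$, and finally at most one smaller part. Since $m\le 3\sqrt{k}+1 < 100\sqrt{k}$ and there are at least $100\sqrt{k}$ parts of size $k$, the top $m$ parts are each equal to $k$. For (b) the mass deficit is $m(m^2-k)$, which by \eqref{eq:m-bounds} is at most $m(8k+6\sqrt{k}+1)$, and each reduction of a later part from $k$ down to $\lfloor k/2\rfloor$ frees $\lceil k/2\rceil \geq k/2$ units. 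Hence the number of reductions required is at most
$$ \frac{2m(8k+6\sqrt{k}+1)}{k} \;=\; 16m + \frac{12m}{\sqrt{k}} + \frac{2m}{k}, $$
and the total number of size-$k$ parts spent by $T_0$ is bounded by a linear function of $m$ of the form $17m + O(m/\sqrt{k})$. Plugging in $m\le 3\sqrt{k}+1$ one expects a bound of the shape $51\sqrt{k}+O(1)$, comfortably below $100\sqrt{k}$.

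The only obstacle I anticipate is tightening the arithmetic for small $k$, where the $O(m/\sqrt{k})$ secondary terms and the rounding in the number of elementary reductions are not negligible compared to the leading $16m$. This can be handled either by absorbing these contributions into an additive constant (safe because the leading coefficient $51$ leaves substantial slack against $100$), or by direct substitution for the finitely many values $k \in \{2, 3, \ldots, K_0\}$ where the asymptotic estimate is loose. No further combinatorial input is needed beyond \eqref{eq:part-ratio-assumption} and the definitional bounds on $m$ already recorded in \eqref{eq:m-bounds}.
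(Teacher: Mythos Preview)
Your proposal is correct and follows essentially the same route as the paper: reserve the first $m$ parts (all of size $k$ by \eqref{eq:part-ratio-assumption}), bound the mass deficit $m(m^2-k)$ using \eqref{eq:m-bounds}, and divide by the $\ge k/2$ freed per reduction. The only differences are cosmetic---the paper substitutes $m\le\sqrt{9k+6\sqrt{k}+1}$ immediately to get $m(m^2-k)\le 48k^{3/2}$ and hence $m+96\sqrt{k}\le 100\sqrt{k}$ parts suffice for all $k\ge 2$, whereas you keep $m$ symbolic longer before substituting; your small-$k$ worry is handled in the paper simply by the slack in the constant $48$.
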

	\begin{proof}
		We need to reserve the first $m$ parts which will be increased to size $m^2$.  We also need enough parts of size $k$ to fill the first $m$ parts to be of size $m^2$.  The total amount of mass needed is thus $$(m^2 - k)\cdot m \leq (8k + 6\sqrt{k} + 1)(\sqrt{9k + 6\sqrt{k} + 1}) \leq 48 k^{3/2}$$
		for $k \geq 2$.  
		Reducing a part from size $k$ to $\lfloor k / 2 \rfloor$ provides at least mass $k/2$.  This implies that it is enough if we have at least $m + 96\sqrt{k}  \leq 100 \sqrt{k}$ parts of size $k$.
	\end{proof}

	The idea is that while each application of $T_0$ deletes $m$ parts of size $k$, it converts at least $8m$ parts of size $k$ into parts of size $\lfloor k/2 \rfloor$.
	
	\begin{lemma}\label{lem:T-decrease}
		Each application of $T_0$ removes $m$ parts and converts at least $8m$ parts of size $k$ into parts of size $\lfloor k/2 \rfloor$.
	\end{lemma}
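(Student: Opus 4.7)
The plan is to unpack the definition of $T_0$ given just above the lemma statement and then perform a straightforward mass-counting argument. First, $T_0$ is set up precisely so that the first $m$ parts of $\lambda$ are raised to the common value $m^2$ using mass transferred from later parts of size $k$, after which a cube of side length $m$ is placed via \cref{fact:cube} and extracted using \cref{lem:split}. By construction this removes exactly $m$ parts from the partition, which is the first half of the claim.

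For the second half I would do mass accounting on the transfer step. Because $T_0$ is only applied when $\lambda$ has at least $100\sqrt{k}$ parts of size $k$ (by \cref{lem:T-domain}) while $m \leq 3\sqrt{k}+1$, the first $m$ parts of $\lambda$ are each already equal to $k$. Raising each of them to size $m^2$ therefore requires exactly $M := m(m^2 - k)$ units of mass. A single full conversion of a part of size $k$ into a part of size $\lfloor k/2 \rfloor$ releases exactly $\lceil k/2 \rceil$ units, and the algorithm converts source parts sequentially until the targets are saturated, leaving at most one final source part only partially reduced. Writing $f$ for the number of fully converted parts, this yields $(f+1)\lceil k/2 \rceil \geq M$, and hence
$$f \;\geq\; \frac{m(m^2 - k)}{\lceil k/2 \rceil} - 1 \;\geq\; \frac{2m(m^2 - k)}{k+1} - 1.$$

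By the defining property $m^2 \geq 9k$ we have $m^2 - k \geq 8k$, so $f \geq \tfrac{16mk}{k+1} - 1$. For every $k \geq 2$ the ratio $16k/(k+1)$ is at least $32/3 > 10$, giving $f \geq 10m - 1 \geq 8m$ for all $m \geq 1$, which concludes the proof. The only subtle point is the off-by-one created by the possibly partial final conversion; this is comfortably absorbed by the gap between $8$ and the value $16k/(k+1) \geq 32/3$. Beyond that, the argument is pure bookkeeping, which is why I chose the threshold $9k$ in the definition of $m$: it is exactly what makes the mass released per converted part large enough to force at least eight conversions per removed target.
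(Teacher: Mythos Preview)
Your proof is correct and follows essentially the same mass-counting approach as the paper: compute the total mass $m(m^2-k)\geq 8mk$ needed to raise the first $m$ parts to $m^2$, bound the mass released per converted source part, and divide. You are simply more explicit than the paper about the off-by-one from a possibly partial final conversion, using the sharper per-part bound $\lceil k/2\rceil$ (and the slack $16k/(k+1)\geq 32/3$) to absorb it, whereas the paper uses the cruder upper bound of $k$ per converted part together with the strict inequality $m^2>9k$.
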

	\begin{proof}
		The only parts removed are the first $m$ parts.  Each of the first $m$ parts must increase from $k$ to $m^2$ requiring a total of $m^2 - k \geq 8k$ mass.  Each part lowered from $k$ to $\lfloor k/2\rfloor$ gives at most mass $k$, thus showing the desired lower bound.
	\end{proof}

	We now summarize the desired properties of the operator $T_0$:
	\begin{lemma}\label{lem:T-props}
		The operation $T_0$ above takes a partition $\lambda$ satisfying \eqref{eq:part-ratio-assumption} and outputs a partition $T_0(\lambda) = \tlambda$ satisfying \eqref{eq:part-ratio-assumption} and if $\tlambda \in \cB$ then $\lambda \in \cB$.
	\end{lemma}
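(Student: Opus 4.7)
The plan is to carry out the construction of $T_0$ stage by stage and verify three facts in order: (a) the intermediate partition $\mu$ obtained just before the first $m$ parts (all equal to $m^2$) are removed satisfies $\lambda \preceq \mu$; (b) $\tlambda = T_0(\lambda)$, which is $\mu$ with those $m$ parts deleted, satisfies \eqref{eq:part-ratio-assumption}; and (c) if $\tlambda \in \cB$ then $\lambda \in \cB$.

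Steps (a) and (c) are straightforward. For (a), every elementary move in the construction transfers a single block from a smaller part to a larger part, which by \cref{lem:prec} is a cover relation in the dominance order; composing these covers yields $\lambda \preceq \mu$. For (c), since $\tlambda_1 \leq k < m^2$, the partition $\mu$ is exactly the ordered concatenation of $m$ copies of $m^2$ with $\tlambda$. By \cref{fact:cube} the block of $m$ copies of $m^2$ lies in $\cB$, so \cref{lem:split} gives $\mu \in \cB$ whenever $\tlambda \in \cB$, and then (a) combined with \cref{lem:dominance-to-cont} yields $\lambda \in \cB$.

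The main obstacle is (b). After the first phase of the construction, the first $m$ parts all equal $m^2$, and most later parts have size $k$ (untouched) or $\lfloor k/2 \rfloor$ (fully reduced). The potentially problematic parts are those with size in the open interval $(\lfloor k/2 \rfloor, k)$: there may be up to two such parts, namely the single one permitted in $\lambda$ by \eqref{eq:part-ratio-assumption} and a ``remainder'' produced by the final, only-partial reduction needed to fill the last first-$m$ position to $m^2$. The cleanup phase is designed precisely to merge these two: writing their sum as $s$, we have $s \in [2\lfloor k/2\rfloor + 2,\, 2k - 2]$, and a short case check shows that one can transfer blocks from the smaller to the larger until the pair becomes either $(\lfloor k/2 \rfloor,\, s - \lfloor k/2\rfloor)$ when $s \leq k + \lfloor k/2\rfloor$, or $(s - k,\, k)$ when $s \geq k + \lfloor k/2\rfloor$, so at most one in-between part survives. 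The at-most-one part of size $< \lfloor k/2\rfloor$ permitted by \eqref{eq:part-ratio-assumption} is never touched by either phase, hence preserved. Since deleting the first $m$ parts of $\mu$ (all equal to $m^2$) leaves the sizes of the other parts unchanged, $\tlambda$ satisfies \eqref{eq:part-ratio-assumption}.
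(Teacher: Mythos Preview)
Your proof is correct and follows the same approach as the paper's own argument, just with considerably more detail. In particular, your careful case analysis of the cleanup phase (splitting on whether $s \leq k + \lfloor k/2 \rfloor$ or $s \geq k + \lfloor k/2 \rfloor$) makes explicit what the paper leaves implicit, and your observation that the unique part of size $< \lfloor k/2\rfloor$ is never touched confirms that no second small part is created.
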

	\begin{proof}
		By construction, the only parts altered are of size $k$ and so the only part that may be left over after removing the first $m$ parts is of size in $(\lfloor k/2 \rfloor, k)$; if more than two of these parts existed then they were combined so that only one may exist.  This shows that $\tlambda$ satisfies \eqref{eq:part-ratio-assumption}.
		Finally, at each step we have either increased the partition in the dominance order or removed a partition in $\cB$ and so  \cref{lem:dominance-to-cont}, \cref{lem:split} and \cref{fact:cube} show the last assertion.        
	\end{proof}

	\begin{proof}[Proof of  \cref{lem:wide-first-step}]
		By  \cref{lem:dominance-to-cont}, we may assume without loss of generality that all but at most one part of $\lambda$ are of size $k$. Note that the number of parts of size $k$ in $\lambda$ is at least $A_0k - 1$ since $k \leq \sqrt{n}/A_0$.
		
		Iteratively apply the operator $T_0$ until we are unable to apply $T_0$ to obtain a partition $\tlambda$.  By  \cref{lem:T-decrease}, each application deletes $m$ parts of size $k$ and converts at least $8m$ parts of size $k$ into parts of size $\lfloor k/2 \rfloor$.  By  \cref{lem:T-domain}, we are left with at most $100\sqrt{k}$ parts of size $k$.  This implies that we are left with a partition $\tlambda$ with at least $\frac{8}{9}(A_0k - 1 - 100\sqrt{k}) \geq \frac{8}{9}A_0k - 90\sqrt{k}$ parts of size $\lfloor k/2\rfloor$.  By  \cref{lem:T-props}, the partition $\tlambda$ satisfies \eqref{eq:part-ratio-assumption} and if $\tlambda \in \cB$ then $\lambda \in \cB$.
	\end{proof}
	
	\subsection{Decreasing the leftover parts of size $k$}

	After \cref{lem:wide-first-step}, the partition has most parts being of size $\lfloor k/2 \rfloor$.  We now simply use a small number of parts of size $\lfloor k/2 \rfloor$ to remove the $O(\sqrt{k})$ many parts of size larger than $\lfloor k/2 \rfloor$.
	
	\begin{lemma}\label{lem:leftover-k}
		Let $\lambda \vdash n$ satisfy \eqref{eq:part-ratio-assumption} and have at least $\frac{8}{9}A_0k - 90\sqrt{k}$ parts of size $\lfloor k/2 \rfloor$ and at most $100 \sqrt{k}$ parts of size $ k$.  Then there is a partition $\tlambda$ with $\tlambda_1 \leq \lfloor k/2 \rfloor$ with at least $\frac{8}{9}A_0k - 2^{20}\sqrt{k}$ parts of size $\lfloor k/2 \rfloor$ so that if $\tlambda \in \cB$ then $\lambda \in \cB$.
	\end{lemma}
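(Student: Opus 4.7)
The plan is to consolidate the at most $M := 100\sqrt{k}+1$ ``bad'' parts of $\lambda$ (those of size strictly greater than $\lfloor k/2\rfloor$) into a cube at the front of a slightly-larger-in-dominance partition $\lambda'$, and then peel off this cube via \cref{lem:split} and \cref{fact:cube}. The bad parts do not carry enough total mass to fill the cube on their own, so a small amount of extra mass is drawn from the abundant size-$\lfloor k/2\rfloor$ parts; the hypothesis provides an enormous reservoir of these, so the draw costs only $O(\sqrt{k})$ many.

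Concretely, I would set $m := \lceil 6\sqrt{k}\rceil$ (ensuring $m^2 > k$ and $m^3 \geq 4Mk$) and define $\lambda'$ to have its first $m$ parts each equal to $m^2$, followed by as many size-$\lfloor k/2\rfloor$ parts as possible with at most a single smaller remainder to enforce $|\lambda'| = n$. The key step is checking $\lambda \preceq \lambda'$, which naturally splits into three regimes. For $j \leq m$ this is immediate from $\sum_{i\leq j}\lambda'_i = jm^2 \geq jk \geq \sum_{i \leq j}\lambda_i$. For $m < j \leq M$, where $\lambda$ still contains bad parts while the corresponding positions of $\lambda'$ have already dropped to $\lfloor k/2\rfloor$, the needed inequality reduces to $j\lceil k/2\rceil + m\lfloor k/2\rfloor \leq m^3$, which holds by $m^3 \geq 4Mk$. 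For $j > M$ it reduces to $m^3 \geq S + (m - M)\lfloor k/2\rfloor$, where $S \leq Mk$ is the total mass of the bad parts, and this too follows from $m^3 \geq 4Mk$. This case analysis is the technical heart of the argument.

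With $\lambda \preceq \lambda'$ established, \cref{lem:dominance-to-cont} reduces the claim to showing $(\lambda',\lambda',\lambda') \in \cB$. Applying \cref{lem:split} at position $m$ splits $\lambda'$ into its first $m$ parts---a cube of side $m$, which lies in $\cB$ by \cref{fact:cube}---and the tail $\tilde\lambda$, which by construction has $\tilde\lambda_1 \leq \lfloor k/2\rfloor$. Hence $\tilde\lambda \in \cB$ forces $\lambda \in \cB$. For the count, forming the cube consumes $m^3 - S$ mass from the size-$\lfloor k/2\rfloor$ parts of $\lambda$, accounting for at most $\lceil m^3/\lfloor k/2\rfloor \rceil + 1 = O(\sqrt{k})$ of them (using $m^3 \leq 216 k^{3/2} + O(k)$ and $\lfloor k/2\rfloor \geq k/3$). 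So $\tilde\lambda$ retains at least $\frac{8}{9}A_0 k - 90\sqrt{k} - O(\sqrt{k}) \geq \frac{8}{9}A_0 k - 2^{20}\sqrt{k}$ size-$\lfloor k/2\rfloor$ parts, with enormous slack. The main obstacle is nothing structural; it is purely the careful bookkeeping of the dominance inequalities and mass transfers, and the constants involved are dwarfed by the $2^{20}$ budget.
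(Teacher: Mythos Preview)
Your approach---build a cube at the front, fill it by drawing from the $\lfloor k/2\rfloor$-reservoir, then peel it off via \cref{lem:split} and \cref{fact:cube}---is exactly the paper's. There is one arithmetic slip: with $m=\lceil 6\sqrt{k}\rceil$ and $M=100\sqrt{k}+1$ you assert $m^3\ge 4Mk$, but $m^3\le(6\sqrt{k}+1)^3\sim 216\,k^{3/2}$ whereas $4Mk\sim 400\,k^{3/2}$. Fortunately the inequality you actually need in the middle and tail regimes is only $m^3\ge M\lceil k/2\rceil+m\lfloor k/2\rfloor\sim 53\,k^{3/2}$, which does hold for your $m$, so the fix is cosmetic (tighten the estimate, or bump $m$ to $\lceil 8\sqrt{k}\rceil$).

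The paper sidesteps your three-regime dominance analysis by instead taking $m:=\lceil 101\sqrt{k}\rceil$, large enough that every part of size $>\lfloor k/2\rfloor$ already sits among the first $m$ positions. Then raising those $m$ parts to $m^2$ using mass drained from later $\lfloor k/2\rfloor$-parts only moves boxes upward in the diagram, so $\lambda\preceq\mu$ is immediate from \cref{lem:prec} with no case split. The larger cube costs more mass, but the $\lfloor k/2\rfloor$-reservoir absorbs it with room to spare; your thriftier cube buys nothing here, and the paper's choice trades a harmless constant for a one-line dominance check.
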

	\begin{proof}
		The partition $\lambda$ has at most $101\sqrt{k}$ parts of size $> \lfloor k/2 \rfloor$.  
		Set $m := \lceil 101\sqrt{k} \rceil$.  We will make the first $m$ parts of the partition equal to size $m^2$ using some number of parts of size $\lfloor k/2 \rfloor$; let $\mu$ denote the resulting partition.  Notice that the total mass needed for this is at most $m^3 \leq 102^3 k^{3/2}$, and so the total number of parts of size $\lfloor k /2 \rfloor$ required to do this is at most $102^3 k / \lfloor k /2 \rfloor \leq (3\cdot 102^3) \sqrt{k}.$  
		
		By construction, note that $\la \preceq \mu$ and so by  \cref{lem:dominance-to-cont} we have that if $\mu \in \cB$ then $\lambda \in \cB$; since $3\cdot 102^3 + 90 < 2^{22}$, $\mu$ has at least $\frac{8}{9}A_0k - 2^{22}\sqrt{k}$ 
		parts of size $\lfloor k/2 \rfloor$; and that $\mu_{m+1} = \lfloor k/2\rfloor$.  Since the first $m$ parts of $\mu$ are $m^2$, by  \cref{lem:split} and \cref{fact:cube} if we let $\tlambda$ be $\mu$ with the first $m$ parts removed, then if $\tlambda \in \cB$ then $\mu \in \cB$.  Noting that $\tlambda$ satisfies $\tlambda_1 \leq \lfloor k/2 \rfloor$ and has at least $\frac{8}{9}A_0 k - 2^{20}\sqrt{k}$ parts of size $\lfloor k/2 \rfloor$ completes the Lemma. 
	\end{proof}
	
	\subsection{Proof of  \cref{prop:partition-wide}}
	
	We now combine the pieces from the previous subsections. 
	\begin{proof}[Proof of  \cref{prop:partition-wide}]
		We will prove the proposition by (strong) induction on $k$.  The base case of $k = 1$ follows from applying  \cref{lem:split} and \cref{fact:cube}.  
		
		To begin our inductive step, let $k \geq 2$.   Assume without loss of generality that each $\lambda,\mu,\nu$ has at most one part not of size $k$ and let $\lambda$ be this resulting partition.  By applying  \cref{lem:wide-first-step} and then \cref{lem:leftover-k}, we obtain a partition $\tlambda$ with $\tlambda_1 \leq \lfloor k/2 \rfloor$ which has at least $\frac{8}{9}A_0k - 2^{20}\sqrt{k}$ parts of size $\lfloor k/2 \rfloor$; we also have that if $\tlambda \in \cB$ then $\lambda \in \cB$. 
		
		By considering only the parts of size $\lfloor k/2 \rfloor$ we see that $$|\tlambda| \geq \left(\frac{8}{9}A_0 k - 2^{22} \sqrt{k}\right)  \lfloor k/2 \rfloor \geq   A_0\lfloor k/2\rfloor^2 $$
		provided $A_0 \geq 2^{24}$.
		We thus have that $\tlambda$ satisfies the hypotheses of the Proposition with the value of $\lfloor k / 2 \rfloor$ and so by induction we have $\tlambda \in \cB$, completing the proof.
	\end{proof}

	\section{Handling partitions with no large parts} \label{sec:no-large}
	In this section we show how to handle partitions whose largest part is on the order of $\sqrt{n}$. These partitions will contain many parts of this size so we will take away these parts in several steps until the resulting partition satisfies  \cref{prop:partition-wide}.
	\begin{proposition}\label{prop:meat}
		For each $\theta \in (0,1)$ if $A_1 \geq 3 \theta^{-1}A_0$ then the following holds.  For each $B > 0$, suppose  $(\lambda,\mu,\nu)$ are partitions of $n$ where each has largest part at most $B \sqrt{n}$ and for $\rho \in \{\lambda,\mu,\nu\}$ we have \begin{equation}\label{eq:tail-mass-assumption}
			\sum_{j} \rho_{j} \one_{\rho_j \leq \sqrt{n}/A_1} \geq \theta\frac{n}{A_1}\,.
		\end{equation}
		Then for $n$ sufficiently large (as a function of $\theta,B,A_1$) we have  $(\lambda,\mu,\nu) \in \cB$.
	\end{proposition}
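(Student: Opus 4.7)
The plan is to reduce \cref{prop:meat} to \cref{prop:partition-wide} by iteratively placing cubes in the contingency table. Each cube strips away a chunk of the ``big mass'' (the portion of each partition coming from parts exceeding $\sqrt{n}/A_1$) in $\lambda, \mu, \nu$, while preserving the reserve of small mass of size at least $\theta n / A_1$ guaranteed by \eqref{eq:tail-mass-assumption}; the residual partitions will then satisfy the hypothesis of \cref{prop:partition-wide}, which furnishes the contingency table.

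First, I would standardize each partition via \cref{lem:dominance-to-cont}. Combining parts only increases a partition in the dominance order, so it suffices to prove the proposition after aggregating the big parts of each of $\lambda, \mu, \nu$ into a contiguous top consisting of blocks of a common size $a^2$, for a carefully chosen integer $a$. A natural choice is $a = \lceil (\sqrt{n}/A_1)^{1/2} \rceil$, so that $a^2$ sits near the threshold $\sqrt{n}/A_1$ while remaining within a constant factor of the original largest part $B\sqrt{n}$; this ensures there is enough big mass to form the top blocks without substantially eroding the small-mass reserve.

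Next, I would peel cubes from the top via \cref{lem:split} and \cref{fact:cube}. Each cube of side $a$ requires $a$ blocks of size $a^2$ from each of the three partitions, so after $r$ cubes the $ra$ topmost blocks have been removed from each. I would choose $r$ large enough that essentially all big mass has been removed, leaving each residual partition with largest part at most $\sqrt{n}/A_1$ and residual total $\widetilde n$ at least roughly $\theta n/A_1$. To apply \cref{prop:partition-wide} to the residual $(\tlambda, \tmu, \tnu)$, one needs $\sqrt{n}/A_1 \le \sqrt{\widetilde n}/A_0$; the hypothesis $A_1 \ge 3 \theta^{-1} A_0$ is calibrated---possibly after an additional round of cube placement to further shrink the largest surviving part---to make this final inequality hold once the error terms from the aggregation step are accounted for.

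The main obstacle I expect is coordinating the three partitions simultaneously. The big-mass totals $S_\lambda, S_\mu, S_\nu$ need not agree, yet the same number $r$ of cubes must be peeled from all three. This forces either truncating $r$ at $\min_\rho S_\rho / a^3$ and subsequently processing what remains with cubes of smaller side, or padding the smaller $S_\rho$ upward using a controlled amount of small mass before peeling. Either route requires delicate bookkeeping to preserve the $\theta n / A_1$ small-mass reserve needed by \cref{prop:partition-wide}, and this is where the precise constant $3 \theta^{-1} A_0$ should enter in its sharpest form.
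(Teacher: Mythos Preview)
Your high-level plan---increase in dominance, peel cubes to strip the big mass, then invoke \cref{prop:partition-wide} on the residual---is exactly the paper's route, but two concrete pieces are off.

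First, the cube side is miscalibrated. With your choice $a=\lceil(\sqrt{n}/A_1)^{1/2}\rceil$ one has $a^2\approx\sqrt{n}/A_1$, which is \emph{smaller} than the big parts (of size up to $B\sqrt{n}$). Since combining parts only moves \emph{up} in dominance, you cannot aggregate parts of size up to $B\sqrt{n}$ into blocks of size $a^2\approx\sqrt{n}/A_1$; that would require splitting. The paper instead sets $b=\lfloor B\sqrt n\rfloor$ and takes the cube side $m$ to be the least integer with $m^2\ge b$; then raising $m$ parts of size $b$ to size $m^2$ (using a few more parts of size $b$ as fuel) \emph{does} increase in dominance, and those $m$ parts form a legitimate cube by \cref{fact:cube}. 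Iterating this operator leaves only $O(\sqrt b)=O(n^{1/4})$ parts of size $b$, which are then cleared by one final cube of side $\lceil 4\sqrt b\rceil$ built from parts of size $a=\lfloor\sqrt n/A_1\rfloor$; this last step only consumes $O_{A_1,B}(n^{1/4})$ of the small parts, so the reserve $\theta n/A_1$ survives essentially intact.

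Second, the ``main obstacle'' you anticipate---synchronizing three possibly different big-mass totals---disappears once you observe that a \emph{single} partition $\rho$ dominates all of $\lambda,\mu,\nu$. Take $\rho$ to consist of roughly $\theta\sqrt n$ parts of size $a=\lfloor\sqrt n/A_1\rfloor$, with the rest of the mass in parts of size $b$ (plus one leftover). The hypotheses $\lambda_1,\mu_1,\nu_1\le b$ together with \eqref{eq:tail-mass-assumption} force $\lambda,\mu,\nu\preceq\rho$, so by \cref{lem:dominance-to-cont} it suffices to prove $(\rho,\rho,\rho)\in\cB$. All the cube-peeling then operates on the single partition $\rho$, and no coordination is needed. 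This reduction is where the constant in $A_1\ge 3\theta^{-1}A_0$ actually enters: after peeling, the residual $\tlambda$ has $|\tlambda|\ge(1+o(1))\theta n/(2A_1)$ and $\tlambda_1\le a$, so $a\le\sqrt{|\tlambda|}/A_0$ holds precisely under this constraint (with a small cushion).
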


	The strategy will be similar to the one employed in  \cref{sec:wide}: we will iteratively remove the larger pieces until all parts are of size at most $\sqrt{n}/A_1$ with size at least $\theta n/(2 A_1)$.  If $A_1$ is large enough as a function of $\theta$ and $A_0$ then  we may apply  \cref{prop:partition-wide} to handle this remaining triple.  As in  \cref{sec:wide}, we will use  \cref{lem:dominance-to-cont} in order to assume that all partitions are equal at each state of the proof of  \cref{prop:meat}.  
	
	Throughout this section, we write $a = \lfloor \sqrt{n}/A_1 \rfloor$ and $b = \lfloor B\sqrt{n}\rfloor$.

	\subsection{Decreasing most parts of size $b$}

	In analogy with  \cref{sec:wide}, it will be useful to maintain that all but a small number of parts are of size $a$ and $b$.  As such, we say a partition satisfies \eqref{eq:part-ab} if it has \begin{align}
		\text{at most one part with size in } (a,b) \nonumber \\
		\text{ at most one part with size } < a, \label{eq:part-ab} \tag{$**$}  \\ 
		\text{ all other parts of size }a \text{ or }b\,.  \nonumber 
	\end{align}
	
	We first remove most parts of size $b$. 
	
	\begin{lemma}\label{lem:meat-most}
		Let $\lambda \vdash n$  satisfy \eqref{eq:part-ab} and have at least $\sqrt{n}/2$ parts of size $a$.  Then there is a partition $\tlambda$ satisfying \eqref{eq:part-ab} so that $\tlambda$ has at most $3\sqrt{b}$ many parts of size $b$ and so that if $\tlambda \in \cB$ then $\lambda \in \cB$.
	\end{lemma}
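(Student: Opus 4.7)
The plan is to build an operator $T_1$ that plays the role of $T_0$ from the proof of \cref{lem:wide-first-step}: each application carves off one cube's worth of parts of size $b$ by first rounding them up to the next perfect square using ``spare'' mass borrowed from parts of size $a$, and then excising the resulting cube via \cref{lem:split} and \cref{fact:cube}. We iterate $T_1$ until fewer than $\ell := \lceil \sqrt{b}\,\rceil$ parts of size $b$ remain; since $\ell \leq 3\sqrt{b}$, this is exactly the cube-count bound the lemma asks for.

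Concretely, observe that $b \leq \ell^2 \leq b + 2\sqrt{b} + 1$. Given $\lambda$ satisfying \eqref{eq:part-ab} with at least $\ell$ parts of size $b$ and enough parts of size $a$, $T_1$ first promotes the first $\ell$ parts from $b$ to $\ell^2$, requiring total extra mass at most $\ell(\ell^2 - b) \leq 3b$. This extra mass is gathered by elementary dominance moves (\cref{lem:prec}): we first deplete the single sub-$a$ part allowed by \eqref{eq:part-ab}, and then deplete fresh parts of size $a$, at most $\lceil 3b/a \rceil + 1 \leq 4 B A_1 + 1$ of them per application. The resulting partition $\mu$ satisfies $\lambda \preceq \mu$, and since the first $\ell$ parts of $\mu$ form a cube of side $\ell$, \cref{lem:split}, \cref{fact:cube}, and \cref{lem:dominance-to-cont} together show that $T_1(\lambda) \in \cB$ implies $\lambda \in \cB$.

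The initial middle-sized part (size in $(a,b)$, if it exists) lies strictly past position $\ell$ and is never touched. By always recycling the leftover sub-$a$ residual first, we never accumulate more than one part of size $< a$ at a time, so $T_1(\lambda)$ again satisfies \eqref{eq:part-ab}. The initial number of parts of size $b$ is at most $n/b \leq \sqrt{n}/B$, so iterating until fewer than $\ell$ remain requires $O(n^{1/4}/B^{3/2})$ applications of $T_1$ and consumes only $O(A_1 n^{1/4}/\sqrt{B})$ parts of size $a$ in total, comfortably less than the available $\sqrt{n}/2$ for $n$ sufficiently large.

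The main technical obstacle is the bookkeeping needed to preserve \eqref{eq:part-ab} throughout the iteration; once one verifies that recycling the sub-$a$ residual keeps its count at most one and that the middle-sized part sits safely below position $\ell$, the rest of the argument reduces to the counting above. The resulting $\tilde\lambda$ has at most $\ell - 1 \leq 3\sqrt{b}$ parts of size $b$, satisfies \eqref{eq:part-ab}, and obeys $\tilde\lambda \in \cB \Rightarrow \lambda \in \cB$ by \cref{lem:dominance-to-cont}, as required.
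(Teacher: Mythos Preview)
Your argument is correct and follows the same high-level plan as the paper: define an operator $T_1$ that rounds the first $\ell=\lceil\sqrt{b}\rceil$ parts of size $b$ up to $\ell^2$, excises the resulting cube via \cref{lem:split} and \cref{fact:cube}, and iterates until fewer than $\ell\leq 3\sqrt{b}$ parts of size $b$ remain.

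The one genuine difference is the source of the extra mass $\ell(\ell^2-b)\leq 3b$ needed per application. The paper's $T_1$ cannibalizes \emph{other parts of size $b$}, reducing them to $0$ and then repairing \eqref{eq:part-ab} by merging any excess middle-sized or sub-$a$ residuals; consequently the domain of the paper's $T_1$ is simply ``at least $3\sqrt{b}$ parts of size $b$,'' and the hypothesis of $\sqrt{n}/2$ parts of size $a$ is never actually used in this lemma (it is only needed downstream in \cref{lem:meat-leftover}). Your $T_1$ instead borrows from the parts of size $a$, which forces you to track their consumption ($O(A_1 B^{-1/2} n^{1/4})$ total, comfortably below $\sqrt{n}/2$) and to recycle the sub-$a$ residual carefully to keep its count at one. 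Both choices work; the paper's is marginally cleaner in that it leaves the $a$-parts untouched and avoids the extra bookkeeping, while yours has the mild advantage that the middle-sized part in $(a,b)$ is genuinely never touched, so no merging step is needed there.
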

	
	We define an operator $T_1$ on partitions satisfying \eqref{eq:part-ab} which will remove parts of size $b$.  Take $m$ to be the smallest integer such that $m^2 \geq b$ and note that \begin{equation}\label{eq:b-m-bounds}
		b \leq m^2 \leq (\sqrt{b}+1)^2 \leq b + 2\sqrt{b} + 1\,.
	\end{equation} 
	
	We will make the first $m$ parts of $\lambda$ to be of size $m^2$ by reducing parts of size $b$ to size $0$.  After this, if there is more than one part with size in  $(a,b)$ or more than one with size $< a$ then move pieces from the smaller part to the larger part until one has either size $a$ or $b$.  These operations increase $\lambda$ in the dominance order by  \cref{lem:prec}.  
	Once the first $m$ parts are of size $m^2$, we  remove the first $m$ parts to obtain $\tlambda$.  
	
	We first record the domain of $T_1$.  
	
	\begin{lemma}\label{lem:T_1-domain}
		The operator $T_1$ may be performed on a partition of $\lambda$ satisfying \eqref{eq:part-ab} provided $\lambda$ has at least $3\sqrt{b}$ parts of size $b$.
	\end{lemma}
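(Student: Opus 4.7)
The operator $T_1$ consumes parts of size $b$ in two ways: it reserves the top $m$ parts of $\lambda$ (each of which it raises to size $m^2$), and it must additionally reduce some further parts of size $b$ down to $0$ in order to supply the total mass $m(m^2-b)$ needed for that upgrade. The plan is therefore to add these two contributions and show that with $3\sqrt{b}$ parts of size $b$ available there is room to spare.

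First I would note that because $\lambda$ satisfies \eqref{eq:part-ab} and, by hypothesis, has at least $3\sqrt{b} \geq m$ parts equal to $b$, the first $m$ parts of $\lambda$ all have size exactly $b$, so the mass that must be added to them is precisely $m(m^2 - b)$ rather than anything larger coming from a leftover part in $(a,b)$. From \eqref{eq:b-m-bounds} I would then bound
\[
m(m^2-b) \;\leq\; (\sqrt{b}+1)(2\sqrt{b}+1) \;=\; 2b + 3\sqrt{b} + 1,
\]
so the number of further parts of size $b$ whose destruction is needed to cover this mass is at most $\lceil m(m^2-b)/b \rceil \leq 3$ once $b$ is bounded away from a small threshold. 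Combining with the $m \leq \sqrt{b}+1$ reserved parts gives a total requirement of at most $\sqrt{b} + 4$, and this is $\leq 3\sqrt{b}$ whenever $\sqrt{b} \geq 2$.

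This is almost pure bookkeeping, and I do not expect a serious obstacle; the only point that needs a little attention is the claim that the first $m$ parts are genuinely of size exactly $b$, which is where the hypothesis of $3\sqrt{b}$ parts of size $b$ is convenient (it automatically dominates $m \leq \sqrt{b}+1$, so no part of size in $(a,b)$ can sneak into the top $m$ slots). A short case check by hand settles the finitely many tiny values of $b$ where the estimate $\lceil m(m^2-b)/b\rceil \leq 3$ is not yet in force, since in those cases $m^2-b$ is $0$ or very small and the total budget stays comfortably below $3\sqrt{b}$.
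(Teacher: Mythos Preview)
Your proposal is correct and follows essentially the same approach as the paper: bound the mass deficit $m(m^2-b)$ by $(\sqrt{b}+1)(2\sqrt{b}+1)$, conclude that at most about $3$ parts of size $b$ need to be destroyed, add the $m \leq \sqrt{b}+1$ reserved parts, and compare with $3\sqrt{b}$. The paper handles the small-$b$ regime simply by invoking ``for $n$ large enough'' (recall $b = \lfloor B\sqrt{n}\rfloor$), and your observation that the first $m$ parts are genuinely equal to $b$ is a helpful clarification that the paper leaves implicit.
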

	\begin{proof}
		We require increasing the first $m$ marginals from $b$ to $m$, which requires a total mass of $$m(m - b) \leq (\sqrt{b} +1) (2\sqrt{b} + 1) \leq 3 b $$ for $n$ large enough; this requires at least $3$ parts of size $b$.  We also require $m \leq 2\sqrt{b}$, thus completing the Lemma.
	\end{proof}

	We record the two main properties of $T_1$. 
	\begin{lemma}\label{lem:T_1-props}
		The operation $T_1$ takes in a partition $\lambda$ satisfying \eqref{eq:part-ab} and outputs a partition $T_1(\lambda) = \tlambda$ satisfying \eqref{eq:part-ab} and if $\tlambda \in \cB$ then $\lambda \in \cB$.
	\end{lemma}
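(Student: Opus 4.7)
The plan is to mirror the proof of \cref{lem:T-props} almost verbatim, since $T_1$ is structurally identical to $T_0$, with $b$ replacing $k$, $a$ replacing $\lfloor k/2 \rfloor$, and $m$ chosen minimally with $m^2 \geq b$ instead of $m^2 > 9k$. There are really only two things to check: that $\tlambda$ again obeys \eqref{eq:part-ab}, and that realizability transfers from $\tlambda$ back to $\lambda$.

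For the first claim, I would inspect the parts touched by $T_1$. The construction modifies only (i) some parts of size $b$, which are reduced to $0$ and absorbed into the first $m$ parts as they are inflated to size $m^2$, and (ii) any ``extra'' parts with size in $(a,b)$ or with size $<a$, which are combined by moving blocks from the smaller part to the larger part until only one of each type remains. Once the first $m$ parts, now all of size $m^2 > b$, are stripped away, the remaining partition $\tlambda$ has at most one part with size in $(a,b)$, at most one part with size $<a$, and all other parts of size $a$ or $b$, which is precisely \eqref{eq:part-ab}.

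For the second claim, I would split $T_1$ into a sequence of atomic operations. Each block-move that reshapes parts (either to inflate the first $m$ parts up to $m^2$ by consuming parts of size $b$, or to consolidate the leftover pieces in (ii)) is a cover move in the dominance order by \cref{lem:prec}, so the intermediate partition $\mu$ obtained just before the strip step satisfies $\lambda \preceq \mu$. By \cref{lem:dominance-to-cont}, realizability of $(\mu,\mu,\mu)$ implies realizability of $(\lambda,\lambda,\lambda)$. The strip step removes the first $m$ parts, each of size $m^2$; by \cref{fact:cube} the triple $((m^2)^m,(m^2)^m,(m^2)^m)$ lies in $\cB$, so \cref{lem:split} gives that if $\tlambda \in \cB$ then $\mu \in \cB$. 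Composing, $\tlambda \in \cB \Rightarrow \mu \in \cB \Rightarrow \lambda \in \cB$.

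I do not anticipate any real obstacle; the only subtlety is the bookkeeping to verify that every intermediate rearrangement is indeed a dominance cover (so that \cref{lem:prec} and \cref{lem:dominance-to-cont} apply) and that the domain condition on $\lambda$ from \cref{lem:T_1-domain} is what guarantees enough parts of size $b$ to complete the inflation step. Both follow immediately from the construction of $T_1$ and the bounds on $m$ from \eqref{eq:b-m-bounds}.
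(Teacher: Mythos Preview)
Your proposal is correct and follows essentially the same approach as the paper's proof, just with more detail; the paper compresses the argument into two sentences, noting that only parts of size $b$ are altered (so \eqref{eq:part-ab} is preserved after consolidation) and that each step is either a dominance increase or the removal of a cube, whence \cref{lem:dominance-to-cont}, \cref{lem:split}, and \cref{fact:cube} finish.
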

	\begin{proof}
		By construction, only the parts of size $b$ are altered and we yield a partition with at most one part in each interval $(a,b)$ and $(0,a)$.  At each step we have either increased the partition in the dominance order or removed a partition in $\cB$, so the conclusion follows from  \cref{lem:split} and \cref{fact:cube}.
	\end{proof}

	Iterating $T_1$ yields  \cref{lem:meat-most}:
	
	\begin{proof}[Proof of  \cref{lem:meat-most}]
		Apply $T_1$ to $\lambda$ until the resulting partition is outside of its domain.  By  \cref{lem:T_1-props} we yield a partition $\tlambda$ satisfying \eqref{eq:part-ab} so that if $\tlambda \in \cB$ then $\lambda \in \cB$.  Finally, since $T_1$ cannot be applied to $\tlambda$,  \cref{lem:T_1-domain} shows that $\tlambda$ has less than $3\sqrt{b}$ parts.
	\end{proof}
	
	\subsection{Decreasing the leftover parts of size $b$ and proving  \cref{prop:meat}}
	
	\begin{lemma} \label{lem:meat-leftover}
		Let $\lambda \vdash n$ satisfy \eqref{eq:part-ab} and have at least $\theta \sqrt{n}$ parts of size $a$ and less than $3\sqrt{b}$ parts of size $b$.  If $A_1 \geq 3\theta^{-1} A_0^2$, then $\lambda \in \cB$.
	\end{lemma}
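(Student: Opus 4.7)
The approach mirrors \cref{lem:leftover-k} from the previous section: absorb the remaining (at most $3\sqrt{b}$) parts of size $b$ into a single cube, using the abundant supply of $a$-sized parts to make up any extra mass, and then apply \cref{prop:partition-wide} to the residual partition.

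Concretely, I would set $m := \lceil 3\sqrt{b}\rceil$, so that $m$ exceeds the number of $b$-parts of $\lambda$ and $m^2\geq 9b$. By \cref{lem:prec} and \cref{lem:dominance-to-cont}, it suffices to pass to a partition $\mu\succeq\lambda$ of $n$ and show $\mu\in\cB$. I would construct $\mu$ by raising the first $m$ parts of $\lambda$ up to size $m^2$, drawing the required mass (at most $m^3 = O(b^{3/2}) = O(n^{3/4})$) entirely from parts of size $a$ by emptying them to $0$; since each such part contributes mass $a \asymp \sqrt{n}/A_1$, the number of $a$-parts consumed is $O(n^{1/4})$, which is much less than the $\theta\sqrt{n}$ available.

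With $\mu$ in hand, \cref{fact:cube} and \cref{lem:split} peel off the first $m$ parts as a cube, reducing the problem to a residual partition $\tlambda$ of size $n' = n - m^3 \geq n/2$ for all sufficiently large $n$. Because $m\geq s$, the first $m$ parts of $\mu$ swallow every $b$-part (and the lone possible part in $(a,b)$), so $\tlambda_1\leq a\leq \sqrt{n}/A_1$. The hypothesis $A_1 \geq 3\theta^{-1}A_0^2 \geq 3 A_0$ (using $\theta\leq 1$ and $A_0\geq 1$) combined with $\sqrt{n'/n}\geq 1/\sqrt{2}$ yields $\tlambda_1 \leq \sqrt{n'}/A_0$, and so \cref{prop:partition-wide} applies to $\tlambda$, giving $\tlambda\in\cB$. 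Unwinding the reductions produces $\lambda\in\cB$.

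I do not anticipate a serious obstacle; the only care needed is bookkeeping: (a) confirming that enough $a$-parts remain for the padding step, and (b) confirming that $|\tlambda|$ is large enough relative to $\tlambda_1$ to invoke \cref{prop:partition-wide}. Both are elementary polynomial-in-$n$ estimates that follow from $m = O(n^{1/4})$ and the lower bound on $A_1$, so the proof should be short and structurally parallel to \cref{lem:leftover-k}.
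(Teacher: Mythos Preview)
Your proposal is correct and follows essentially the same route as the paper: absorb the at most $O(\sqrt{b})$ large parts into a single cube using mass drawn from the supply of $a$-parts, then invoke \cref{prop:partition-wide} on the residual. The only cosmetic differences are that the paper takes $M=\lceil 4\sqrt{b}\rceil$ (to comfortably cover the $<3\sqrt b$ parts of size $b$ plus the one possible part in $(a,b)$) and lower-bounds $|\tlambda|$ by counting the surviving $a$-parts rather than by your direct $n'=n-m^3\geq n/2$; your estimate is in fact tighter and makes the final comparison $\tlambda_1\leq\sqrt{n'}/A_0$ a bit cleaner.
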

	\begin{proof}
		There are at most $4\sqrt{b}$ parts of size greater than $  a$.  Set $M = \lceil 4 \sqrt{b} \rceil$.  We will use parts of size $a$ to increase the first $M$ parts to size $M^2$.  This increases the partition in the dominance order, and so if we remove the first $M$ parts after this increase then we obtain a partition $\tlambda$ so that if $\tlambda \in \cB$ then $\lambda \in \cB$ by  \cref{lem:dominance-to-cont}, \cref{lem:split} and \cref{fact:cube}.
		
		Increasing the first $M$ parts to size $M^2$ requires a total mass of at most $$M^3 \leq 65 B^{3/2} n^{3/4} $$ which can be provided by $(66 B^{3/2} A_1)n^{1/4}$ parts of size $a$; we may additionally require at most  $M \leq 5 \sqrt{B} n^{1/4}$ parts of size $a$, as up to $M - 1$ many may be increased.  This shows $\tlambda$ has at least $\theta\sqrt{n} - O_{A_1,B}(n^{1/4}) \geq \theta \sqrt{n}/2$ many parts of size $a$ for $n$ large enough.   This shows that $$|\tlambda| \geq \frac{\theta\sqrt{n}}{2}\cdot a = (1 + o(1)) \frac{\theta n}{2A_1}\,.$$
		For $A_1 \geq 3 \theta^{-1} A_0^2$ we thus have $$a = \lfloor \sqrt{n} / A_1 \rfloor \leq \frac{\sqrt{|\tlambda|} }{A_0}$$
		for $n$ large enough as a function of $\theta$ and $B$
		and so we may apply  \cref{prop:partition-wide} to see $\tlambda \in \cB.$  This shows $\lambda \in \cB$.
	\end{proof}
	
	\begin{proof}[Proof of  \cref{prop:meat}]
		For each integer $n$ construct a partition $\rho$ by setting there to be $\sqrt{n}/2$ parts of size $a$, create as many remaining parts of size $b$ and have one leftover part (which may be of any size $< b$).  Note that this partition $\rho$ satisfies \eqref{eq:part-ab} and that $\lambda \preceq \rho, \mu \preceq \rho$ and $\nu \preceq \rho$ by \eqref{eq:tail-mass-assumption}.  By  \cref{lem:dominance-to-cont} we may assume without loss of generality that $\lambda = \mu = \nu = \rho$.  Applying  \cref{lem:meat-most} and \cref{lem:meat-leftover} completes the proof. 
	\end{proof}

	\section{The final step: removing large parts} \label{sec:large-parts}
	
	The goal of this section is to remove the large parts of the three partitions. Each partition may have an exceptional large part so we show that we can fill out the table using that part so that the remaining partition satisfies the conditions of  \cref{prop:meat}.

	\begin{proof}[Proof of  \cref{th:deterministic}]
		Set $a = \lfloor \sqrt{n} / A_0\rfloor$ for $A_0$ to be taken sufficiently large in the proof.  We can only increase each partition in the dominance order by assuming that the first part of each partition is of size $a^2 \geq n /(2A_0^2)$ and all other parts are of size at most $B\sqrt{n}$.  By  \cref{lem:dominance-to-cont} we may assume this holds for each partition without loss of generality.  Further, we may assume that each partition has at least $\theta \sqrt{n}$ parts of size $a$.  For $A_0$ large enough we have $1/A_0 \leq \theta/3$, and so we will assume this holds.

		We will prove the theorem in three steps, where we first remove the large part of $\lambda$ then $\mu$ then $\nu$.  Let $S_2$ and $S_3$ be intervals of size $a$ for which we have $\mu_j = a$ for all $j \in S_2$ and $\nu_k = a$ for all $k \in S_3$.
		
		To remove the large part of $\lambda$, note that if we consider the partial contingency table $M$ where we set $M(1,j,k) = 1$ for $j \in S_2, k\in S_3$, then this fixes the marginals corresponding to $\lambda_1, \mu_j$ for $j \in S_2$ and $\nu_k$ for $k \in S_3$.  Applying  \cref{lem:split} shows that removing $\lambda_1$, $\mu_j$ for $j \in S_2$ and $\nu_k$ for $k \in S_3$ yields partitions $\tlambda,\tmu,\tnu$ for which $(\tlambda,\tmu,\tnu) \in \cB$ implies $(\lambda,\mu,\nu) \in \cB$.  Repeating this to remove the large part of $\mu$ and then the large part of $\nu$ yields partitions $(\tlambda,\tmu,\tnu)$ with largest part at most $B\sqrt{n}$ so that if $(\tlambda,\tmu,\tnu) \in \cB$ then $(\lambda,\mu,\nu) \in \cB$.  The only thing left to check is that each subsequent partition satisfies the hypotheses of  \cref{prop:meat}.  Note that each has largest part $B \sqrt{n}$ and that they each have at least $\theta \sqrt{n} /3$ parts of size $a$.  Finally, note that $|\tlambda| = n 
		- 2a^2 = n(1 - O(\frac{1}{A_0^2}))\,.$
		For $A_0$ sufficiently large, we thus have $$\sum_{j} \tlambda_j \one_{\tlambda_j \leq 2 \sqrt{|\tlambda|}/A_0} \geq \sum_{j} \tlambda_j \one_{\tlambda_j \leq a} \geq \frac{\theta \sqrt{n} a}{3} \geq \frac{\theta |\tlambda| }{8 (A_0 / 2)\,.}  $$
		Thus, taking $A_0$ large enough as a function of $\theta$ allows us to apply  \cref{prop:meat}, completing the proof.
	\end{proof}

	\section{A shape theorem and the proof of  \cref{th:main}} \label{sec:shape-thm}
	Classical theorems of Vershik \cite{vershik1996statistical} and Fristedt \cite{fristedt1993structure} prove a \emph{shape theorem} for random partitions; namely, if one rescales the Ferrer's diagram of a random partition by $\sqrt{n}$ in each direction, then the diagram adheres closely to a deterministic shape.  A sufficiently strong version of the shape theorem will show that a random partition satisfies \eqref{eq:shape-assumptions} with probability at least $1 - e^{-c\sqrt{n}}$. We refer the reader to the surveys of Shlosman \cite{shlosman2001wulff} and Okounkov \cite{okounkov2016limit} for more on the limit shapes of partitions.  Throughout this section, we write $\P_n$ for the probability measure on partitions $\lambda \vdash n$ chosen uniformly at random.
	
	\begin{theorem}\label{th:shape}
		For each $[t_0,t_1] \subset (0,\infty)$ and $\eps > 0$ there is a $c = c(\eps,t_0,t_1) > 0$ so that 
		$$ \P_n\left( \max_{[s_0,s_1] \subset [t_0,t_1]} \left|\frac{1}{n}\sum_{j} \lambda_j \one_{\lambda_j/ \sqrt{n} \in [s_0,s_1]} -  \int_{s_0}^{s_1} \frac{x}{e^{\frac{\pi}{\sqrt{6}}x} - 1} \,dx\right| \geq \eps \right) \leq \exp(-c \sqrt{n})\,.$$
	\end{theorem}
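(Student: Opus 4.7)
The natural approach is to work in Fristedt's grand canonical ensemble.  For $q = e^{-\pi/\sqrt{6n}}$, let $Q_q$ denote the probability measure on integer partitions with $Q_q(\lambda) \propto q^{|\lambda|}$.  Under $Q_q$, the part multiplicities $m_k(\lambda) = |\{j : \lambda_j = k\}|$ are mutually independent with $Q_q(m_k = j) = (1-q^k)q^{kj}$, and conditioning on $|\lambda|=n$ recovers $\P_n$.  Hardy--Ramanujan type asymptotics (see, e.g., \cite{fristedt1993structure}) give $Q_q(|\lambda|=n) = \Theta(n^{-3/4})$, so it suffices to prove the claimed concentration statement under $Q_q$; the polynomial factor is absorbed at the exponential scale $e^{-c\sqrt{n}}$.

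For fixed $[s_0,s_1] \subset [t_0,t_1]$, write
$$
X_{s_0,s_1} = \frac{1}{n}\sum_{k\,:\,k/\sqrt{n} \in [s_0,s_1]} k\, m_k,
$$
which under $Q_q$ is a sum of independent nonnegative variables.  Since $1 - q^k = (1+o(1))\frac{k\pi}{\sqrt{6n}}$ uniformly for $k \in [t_0\sqrt{n},t_1\sqrt{n}]$, a Riemann-sum comparison gives
$$
Q_q[X_{s_0,s_1}] = \int_{s_0}^{s_1}\frac{x}{e^{\pi x/\sqrt{6}}-1}\,dx + O(n^{-1/2})
$$
uniformly in the subinterval.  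For concentration, the moment generating function $Q_q[e^{t k m_k}] = (1-q^k)/(1-e^{tk}q^k)$ is finite whenever $t < \pi/\sqrt{6n}$, so the rescaling $t = \tau/\sqrt{n}$ is admissible for $|\tau|<\pi/\sqrt{6}$.  A direct expansion shows
$$
\log Q_q\bigl[e^{tn(X_{s_0,s_1}-Q_q[X_{s_0,s_1}])}\bigr] = \tfrac{1}{2}\tau^2 V(s_0,s_1)\sqrt{n} + O(\tau^3 \sqrt{n}),
$$
where $V(s_0,s_1)$ is an explicit integral bounded uniformly on subintervals of $[t_0,t_1]$.  A Chernoff optimization at $\tau \asymp \eps$ then yields
$$
Q_q\bigl(|X_{s_0,s_1}-Q_q[X_{s_0,s_1}]| \geq \eps/2\bigr) \leq \exp(-c'(\eps,t_0,t_1)\sqrt{n}).
$$

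To promote this to a bound on the supremum over all subintervals, I would take an $\eta$-net $\mathcal{N}$ of $[t_0,t_1]$ of cardinality $O(\eta^{-1})$, where $\eta = \eta(\eps,t_0,t_1)$ is small enough that both the target integral and the expected mass of $X_{s,s+\eta}$ change by at most $\eps/4$.  Applying the single-interval bound to each of the $O(\eta^{-2})$ pairs in $\mathcal{N}^2$, and to the two residual intervals of length at most $\eta$ separating any $[s_0,s_1]$ from its nearest grid approximation, absorbs the discretization error and produces the supremum bound under $Q_q$; transferring to $\P_n$ via the first paragraph completes the argument.  The main technical obstacle is keeping the cumulant expansion quantitatively uniform in $[s_0,s_1] \subset [t_0,t_1]$ so that the constant $c'$ does not degenerate.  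The hypothesis $[t_0,t_1] \subset (0,\infty)$ is essential here: the mean $Q_q[m_k] \sim \sqrt{n}/k$ blows up as $k \to 0$, and without a positive lower endpoint the cubic remainder term in the cumulant expansion would no longer be controlled uniformly.
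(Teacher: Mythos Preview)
The paper does not actually prove this theorem: it simply records that ``this version of the shape theorem follows from, e.g., \cite[Corollary 5.2]{MMP}.''  So there is no in-paper argument to compare against.  Your sketch---Fristedt's grand canonical measure $Q_q$ with $q=e^{-\pi/\sqrt{6n}}$, exponential concentration for each fixed subinterval via a Chernoff bound on the independent geometrics $m_k$, a finite $\eta$-net to upgrade to the supremum, and transfer back to $\P_n$ using $Q_q(|\lambda|=n)=\Theta(n^{-3/4})$---is exactly the standard route to such quantitative shape theorems and is the method behind the cited result.  The scaling you identify is correct: with $t=\tau/\sqrt{n}$ the centred log-MGF is $\Theta(\tau^2\sqrt{n})$, yielding the $e^{-c\sqrt{n}}$ tail, and the polynomially many net points are harmless at that scale.

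One technical slip: the line ``$1-q^k=(1+o(1))\,k\pi/\sqrt{6n}$ uniformly for $k\in[t_0\sqrt{n},t_1\sqrt{n}]$'' is false in that range.  For $k=x\sqrt{n}$ with $x$ bounded away from $0$ one has $q^k=e^{-\pi x/\sqrt{6}}$, so $1-q^k=1-e^{-\pi x/\sqrt{6}}$, not $\pi x/\sqrt{6}$.  This does not damage the argument---the Riemann-sum identification of $Q_q[X_{s_0,s_1}]$ with $\int_{s_0}^{s_1} x/(e^{\pi x/\sqrt{6}}-1)\,dx$ goes through directly from $E[m_k]=q^k/(1-q^k)$ without any small-$k$ linearisation---but you should replace that sentence with the correct computation.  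With that fix, your proposal is a sound outline of a proof the paper chose to outsource.
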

	This version of the shape theorem follows from, e.g., \cite[Corollary 5.2]{MMP}. We will deduce the following corollary from  \cref{th:shape}.
	
	\begin{corollary}\label{cor:limit-shape}
		For each $\theta \in (0,\sqrt{6}/\pi)$ there is a $A_0 > 0$ so that for each $A \geq A_0$ there is a constant $c  = c(\theta,A) > 0$ so that $$\P_n\left(\lambda \text{ satisfies }\eqref{eq:shape-assumptions} \text{ with parameters }(\theta,A,A) \right) \geq 1 - 2e^{-cn}\,.$$ 
	\end{corollary}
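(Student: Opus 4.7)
Write $f(x) = x/(e^{\pi x/\sqrt{6}}-1)$ for the density appearing in \cref{th:shape}. The plan is to deduce both halves of \eqref{eq:shape-assumptions} by invoking \cref{th:shape} once, on a single interval $[\delta,A]$, with the tolerance $\varepsilon$ and the endpoint $\delta$ chosen small enough in terms of $A$ and $\theta$.

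Three analytic properties of $f$ drive the argument. First, $\int_0^{\infty} f(x)\,dx = 1$, via the substitution $u = \pi x/\sqrt{6}$ and $\int_0^{\infty} u/(e^u-1)\,du = \pi^2/6$. Second, $\lim_{x\to 0^+} f(x) = \sqrt{6}/\pi$, so $A\int_0^{1/A} f(x)\,dx \to \sqrt{6}/\pi$ as $A \to \infty$; since $\theta < \sqrt{6}/\pi$, one may pick $A_0$ so that for every $A \geq A_0$,
$$\int_0^{1/A} f(x)\,dx \;>\; \frac{\theta + 2\eta}{A}$$
for some $\eta = \eta(\theta) > 0$. Third, $f$ has exponential decay at infinity, so (enlarging $A_0$ if necessary) one can also guarantee $\int_A^\infty f(x)\,dx < 1/(8A^2)$ for all $A \geq A_0$.

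Having fixed $A \geq A_0$, I would choose $\delta = \delta(A,\theta) > 0$ small enough that $\int_0^\delta f(x)\,dx < \min(\eta/A,\,1/(8A^2))$ (possible by continuity of $f$ at $0$), and then apply \cref{th:shape} with $[t_0,t_1] = [\delta,A]$ and tolerance $\varepsilon := \min(\eta/A,\,1/(8A^2))$. This produces a constant $c = c(A,\theta) > 0$ and an event $E$ with $\P_n(E) \geq 1 - \exp(-c\sqrt{n})$ on which every subinterval $[s_0,s_1] \subset [\delta,A]$ satisfies
$$\left|\frac{1}{n}\sum_j \lambda_j\,\one_{\lambda_j/\sqrt{n} \in [s_0,s_1]} - \int_{s_0}^{s_1} f(x)\,dx\right| \;\leq\; \varepsilon.$$
The lower-bound clause of \eqref{eq:shape-assumptions} then follows on $E$ by specializing to $[s_0,s_1] = [\delta,1/A]$: the mass in parts of size at most $\sqrt{n}/A$ is at least the mass in parts of size in $[\delta\sqrt{n},\sqrt{n}/A]$, which on $E$ exceeds $n(\int_\delta^{1/A} f - \varepsilon) \geq \theta n/A$.

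For the upper-bound clause I would \emph{not} try to apply the shape theorem on an unbounded interval, and instead use the complementary identity
$$\sum_j \lambda_j\,\one_{\lambda_j > A\sqrt{n}} \;\leq\; n \;-\; \sum_j \lambda_j\,\one_{\lambda_j/\sqrt{n}\in[\delta,A]},$$
which on $E$ is at most $n\bigl(1 - \int_\delta^A f(x)\,dx + \varepsilon\bigr) = n\bigl(\int_0^\delta f + \int_A^\infty f + \varepsilon\bigr) \leq n/(2A^2)$ by the three choices of constants above. The main conceptual pitfall is the temptation to control the upper tail via a concentration inequality on $\lambda_1$; because $\lambda_1$ is typically of order $\sqrt{n}\log n$ (not $\sqrt{n}$), no such bound with a constant threshold can hold with exponentially small failure probability, and the complement trick above is what sidesteps the issue.
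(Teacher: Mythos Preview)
Your argument is correct and follows essentially the same route as the paper: apply \cref{th:shape} once on a compact interval whose right endpoint is $A$, use the subinterval ending at $1/A$ for the lower clause of \eqref{eq:shape-assumptions}, and obtain the upper clause by the complement identity rather than by controlling $\lambda_1$. The only cosmetic difference is that the paper takes the left endpoint to be the explicit power $A^{-3}$ (with an intermediate $\theta'\in(\theta,\sqrt{6}/\pi)$), whereas you choose an abstract $\delta=\delta(A,\theta)$; both choices serve the same purpose of keeping $\int_0^{\delta}f$ negligible.
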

	\begin{proof}
		Note first that for $T$ sufficiently large we have $$\int_T^\infty \frac{x}{e^{\frac{\pi}{\sqrt{6}}x} - 1}\,dx \leq e^{-T}$$ since $\pi/\sqrt{6} > 1$.  Additionally, by the fundamental theorem of calculus we have that $$\lim_{t\to 0} t^{-1} \int_0^t \frac{x}{e^{\frac{\pi}{\sqrt{6}}x} - 1} \,dx =\frac{\sqrt{6}}{\pi}\,. $$
		In particular, for $\theta' \in (\theta,\sqrt{6}/\pi)$ we can take $A_0$ sufficiently large so that for all $A \geq A_0$ we have $$\int_{1/A^2}^{1/A} \frac{x}{e^{\frac{\pi}{\sqrt{6}}x} - 1}\,dx \geq \frac{\theta'}{A}\qquad \text{ and } \qquad \int_{1/A^3}^{A} \frac{x}{e^{\frac{\pi}{\sqrt{6}}x} - 1}\,dx \geq 1 - \frac{1}{4 A^2}\,.$$
		Applying  \cref{th:shape} with $[t_0,t_1] = [A^{-3},A]$ and taking $\eps \leq \min\{\frac{\theta - \theta'}{A},\frac{1}{4A^2}\}$ shows \begin{align*}
			\P_n\left(\frac{1}{n} \sum_{j} \lambda_j \one_{\lambda_j/\sqrt{n} \in [A^{-3},A]} \leq 1 - \frac{1}{2A^2} \right) \leq e^{-c\sqrt{n}}
		\end{align*}
		and 
		\begin{align*}
			\P_n\left(\frac{1}{n} \sum_{j} \lambda_j \one_{\lambda_j/\sqrt{n} \in [A^{-3},A]} \leq \frac{\theta}{A} \right) \leq e^{-c\sqrt{n}}\,.
		\end{align*}
		The complements of these two events imply the first and second properties in \eqref{eq:shape-assumptions} respectively. 
	\end{proof}
	
	\begin{proof}[Proof of  \cref{th:main}]
		Let $\theta = 1/2$ and $A_0$ as guaranteed by  \cref{th:deterministic}. Applying  \cref{cor:limit-shape} for $\theta = 1/2$ yields a constant $A_0'$.  Taking $A \geq \max\{A_0,A_0'\}$ completes the proof. 
	\end{proof}

	\section{Hypergraphs} \label{sec:hypergraphs}
	
	In this section we deduce  \cref{th:hypergraph} from  \cref{th:deterministic}.  Before doing so, we recall a few relevant definitions.
	
	A three-uniform hypergraph $H=(V,E)$ consists of a set of vertices $V$ and an edge set $E$ where each edge is made up of exactly three distinct vertices; in particular if $e\in E$ then $e=\{u,v,w\}$ for some $u,v,w\in V$.  The \emph{degree} of a vertex $v \in V$ is the number of edges containing $e$ and is denoted $\deg(v)$; when there is more than one relevant hypergraph, we will write $\deg_H(v)$ for the degree of $v$ in $H$.  For a partition $\lambda \vdash 3n$, write $\lambda \in \mathcal{H}_3$ if there is a 3-uniform hypergraph with degree sequence given by $\lambda$. 
	
	Before proving  \cref{th:hypergraph}, we first prove an analogue of  \cref{lem:dominance-to-cont}.
	
	\begin{lemma}\label{lem:hyper-dom}
		Let $\lambda,\mu \vdash 3n$ with $\lambda \preceq \mu$.  If $\mu \in \mathcal{H}_3$ then $\lambda \in \mathcal{H}_3$.
	\end{lemma}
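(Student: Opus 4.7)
The plan is to mirror the proof of \cref{lem:dominance-to-cont}. First, by iterating along a saturated chain in the dominance order, I reduce to the case $\lambda \precdot \mu$. Then by \cref{lem:prec} there is an index $i_0$ with $\mu_{i_0} = \lambda_{i_0} + 1$, $\mu_{i_0+1} = \lambda_{i_0+1} - 1$, and $\mu_i = \lambda_i$ for $i \notin \{i_0, i_0+1\}$. Since $\lambda_{i_0} \geq \lambda_{i_0+1}$ we obtain the crucial inequality $\mu_{i_0} - \mu_{i_0+1} \geq 2$.

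Given a 3-uniform hypergraph $H = (V, E)$ realizing $\mu$, I would pick vertices $u, v \in V$ with $\deg_H(u) = \mu_{i_0}$ and $\deg_H(v) = \mu_{i_0+1}$ and perform the natural hypergraph analogue of the swap in \cref{lem:dominance-to-cont}: find an edge $e \ni u$ with $v \notin e$, and replace it by $f(e) := (e \setminus \{u\}) \cup \{v\}$. If $f(e) \notin E$, then $H' := (V, (E \setminus \{e\}) \cup \{f(e)\})$ is still a $3$-uniform hypergraph, and its degree multiset differs from that of $H$ only in that $\mu_{i_0}$ is replaced by $\mu_{i_0} - 1 = \lambda_{i_0}$ and $\mu_{i_0+1}$ by $\mu_{i_0+1} + 1 = \lambda_{i_0+1}$. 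The inequality $\mu_{i_0} - \mu_{i_0+1} \geq 2$ together with $\mu_i = \lambda_i$ off of $\{i_0, i_0+1\}$ ensures this multiset sorts to $\lambda$, so $H'$ witnesses $\lambda \in \mathcal{H}_3$.

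The main obstacle, which is not present in the contingency-table setting of \cref{lem:dominance-to-cont}, is guaranteeing that $f(e) \notin E$; otherwise the swap would create a repeated edge and $H'$ would not be a simple hypergraph. I would resolve this with a short counting argument. Let $A$ denote the set of edges containing $u$ but not $v$, $B$ the set of edges containing $v$ but not $u$, and $C$ the set of edges containing both. Then $\deg_H(u) = |A| + |C|$ and $\deg_H(v) = |B| + |C|$, so $|A| - |B| = \deg_H(u) - \deg_H(v) \geq 2$. The map $f \colon A \to \binom{V}{3}$ is injective and sends each $e \in A$ to a triple containing $v$ but not $u$. If $f(e) \in E$ held for every $e \in A$, then $f(A) \subseteq B$, forcing $|A| \leq |B|$, a contradiction. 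Hence some $e \in A$ satisfies $f(e) \notin E$, and performing the swap completes the proof.
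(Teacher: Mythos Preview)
Your proof is correct and follows essentially the same approach as the paper: reduce to a covering relation, pick a realizing hypergraph, and swap one vertex for another in a suitable edge. The paper simply asserts the existence of $k,\ell$ with $\{v_{i_0},v_k,v_\ell\}\in E$ and $\{v_{i_0+1},v_k,v_\ell\}\notin E$ from $\mu_{i_0}>\mu_{i_0+1}$, whereas you supply the explicit injection-counting argument $|A|>|B|$ that justifies this; so your write-up is, if anything, more careful on that point.
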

	\begin{proof}
		Assume without loss of generality that $\lambda \precdot \mu$.  By  \cref{lem:prec}, there is an $i_0$ so that $$\lambda_{i_0} + 1 = \mu_{i_0}, \quad \lambda_{i_0 + 1} - 1 = \mu_{i_0 + 1}, \quad \text{ and } \lambda_i = \mu_i, \quad \text{ for all }i \notin \{i_0,i_0 + 1\}\,.$$
		Let $H$ be a hypergraph with degree sequence $\mu$, and let $v_1,v_2,\ldots$ be the vertices of $H$ ordered by degree.  Since $\mu_{i_0} > \mu_{i_0+1}$, there is a pair $k,\ell$ so that  is an edge $e:=\{v_{i_0},v_k,v_\l \} \in E(H)$ and $\widetilde{e} := \{v_{i_0+1},v_k,v_\l\} \notin E(H)$.  Define the hypergraph $\widetilde{H}$ by $V(\widetilde{H}) = V(H)$ and $E(\widetilde{H}) = E(H) - e + \widetilde{e}$.  
		
		We now claim that the degree sequence of $\widetilde{H}$ is $\lambda$.  Note first that $\deg_{\widetilde{H}}(v_i)=\la_i=\mu_i$ for $i \notin \{i_0, i_0 + 1\}$.  Further we have  
		$$\deg_{\widetilde{H}}(v_{i_0})=\deg_H(v_{i_0})-1=\mu_{i_0}-1=\la_{i_0}$$ and $$\deg_{\widetilde{H}}(v_{i_0+1})=\deg_H(v_{i_0+1})+1=\mu_{i_0+1}+1=\la_{i_0+1}. $$ Hence, $\widetilde{H}$ is the 3-uniform hypergraph realization of degree sequence $\la$.
	\end{proof}
	
	With our lemma on dominance orders now established, we now prove  \cref{th:hypergraph}.  The idea will be to break $\lambda \vdash 3n$ into three partitions of $n$ and then use  \cref{th:deterministic} to construct a tripartite hypergraph with degree sequence $\lambda$.  
	
	\begin{proof}[Proof of  \cref{th:hypergraph}]
		Let $\lambda \vdash 3n$ be chosen uniformly at random.  Applying  \cref{cor:limit-shape} with $\theta = 1/2$ we see that for each fixed $A$ large enough by  \cref{cor:limit-shape} we have \begin{align*}
			\sum_{j} \lambda_j \one_{\lambda_j > A\sqrt{3n}} &\leq \frac{3n}{2 A^2} \\
			\sum_{j} \lambda_j \one_{\lambda_j \leq \sqrt{3n}/A} &\geq \frac{1}{2} \cdot \frac{3n}{ A}
		\end{align*}
		with probability at least $1 - \exp(-\Omega(\sqrt{n}))\,.$
		
		We will increase $\lambda$ in the dominance order so that we may divide it into three partitions of $n$ while still satisfying some version of \eqref{eq:shape-assumptions}.  By  \cref{lem:hyper-dom} if we increase $\lambda$ in the dominance order to yield a partition in $\mathcal{H}_3$ then we will have that $\lambda \in \mathcal{H}_3$.
		
		First, we may combine all parts of $\lambda$ that are larger than $A \sqrt{3n}$ into a single part of mass $\leq 3n/2A^2$.  
		
		Second, we may take all parts of size $\leq \sqrt{3 n} / A$ and combine them into parts of size $a := \lfloor \sqrt{3 n} /A \rfloor$ with at most one part of size $< a$.  By the second displayed equation above, this will yield at least $\sqrt{n}$ parts of size $a$. 
		
		The last step will be to break up $\lambda$ into three partitions of $n$.  With this in mind, define $N := \lfloor \frac{\sqrt{n} }{4} \rfloor$.  The idea will be for each of the three partitions to have $N$ parts of size $a$.  In order to do so, first define $i_0$ to be the least $i$ so that $\sum_{j \leq i} \lambda_j \leq n - aN$.  For $A$ large enough, we have that $3n / (2 A^2) < n - aN$ and so we have $\lambda_2 \leq A\sqrt{3n}$; in particular, this implies that $n - aN\geq \sum_{j \leq i_0} \lambda_j \geq n - aN -A\sqrt{3n}$.  If $\lambda_1 - \lambda_2 > A\sqrt{3n}$ then we may decrease at most $2 A^2$ parts of size $a$ to increase $\lambda_2$ so that we exactly have \begin{align*}
			\sum_{j \leq i_0} \lambda_j = n - aN\,.
		\end{align*}
		Otherwise, we will increase $\lambda_1$ instead to yield the above displayed equation.  In either case, we have that e.g. \begin{align} \label{eq:hypergraph-large-parts}
			\sum_{j} \lambda_j \one_{\lambda_j > A\sqrt{4n}} \leq \frac{3n}{2 A^2} \end{align}
		for $n$ large enough.
		
		Now define $i_1$ to be the least $i$ so that $\sum_{i_0 <j \leq i} \lambda_j \leq n - aN$.  Again, we may decrease at most $2 A^2$ parts of size $a$ to increase $\lambda_{i_0 + 1}$ so that we exactly have $$\sum_{j = i_0 + 1}^{i_1} \lambda_{j} = n - aN\,.$$
		
		We note that by construction we have only used $O_A(1)$ many parts of size $a$ to increase the large parts.  
		
		We now define three partitions $\tlambda,\tmu,\tnu$: $\tlambda$ will consist of $(\lambda_1,\ldots,\lambda_{i_0})$ together with $N$ parts of size $a$; $\tmu$ will consist of $(\lambda_{i_0+1},\ldots,\lambda_{i_1})$ together with $N$ parts of size $a$; and $\tnu$ will consist of all remaining parts.  Note that by construction all three partitions are partitions of $n$ and that if $(\tlambda,\tmu,\tnu) \in \cB$ then $\lambda \in \mathcal{H}_3$.  
		
		Set $A' = A/\sqrt{3}$ and $B = 2A$.   We now note that for $\rho \in \{\tlambda,\tmu,\tnu\}$, equation \eqref{eq:hypergraph-large-parts} shows \begin{align*} 
			\sum_{j} \rho_j \one_{\rho_j > B\sqrt{n}} \leq \frac{n}{2 (A')^2}\,. \end{align*}
		Further, each has at least $N$ parts of size $a$, implying that $$\sum_j \rho_j \one_{\rho_j \geq \sqrt{n}/A'} \geq N\cdot a \geq \frac{n}{5 A'}\,. $$
		Allowing $A$ and $n$ to be sufficiently large, we then may apply  \cref{th:deterministic} with $\theta = 1/5$ to show $(\tlambda,\tmu,\tnu) \in \cB$, completing the proof.
	\end{proof}

	\subsection{Linear Hypergraphs} \label{sec:linear-hypergraph}
	A \textit{linear} hypergraph is a hypergraph where each pair of vertices lies in at most edge. One can ask a similar question whether a randomly chosen partition can be realized as the degree sequence of a linear 3-uniform hypergraph.  In contrast to  \cref{th:hypergraph}, we show that the probability of this realization tends to 0. 
	
	The main idea is to look at the shadow of the hypergraph.  The \textit{shadow} of a 3-uniform hypergraph is a 2-uniform hypergraph---i.e.\ an ordinary graph---on the same vertex set where each edge of the hypergraph is replaced with a triangle.  In particular, we can write the shadow graph $\partial H$ to have the same vertex set of $H$ and an edge between $u$ and $v$ if there is some $w$ so that $\{u,v,w\} \in E(H)$.  Crucially, if $H$ is a linear hypergraph, then the shadow graph $\partial H$ is a simple graph, meaning no edge has multiplicity greater than $2$.

	Once we view our 3-uniform linear hypergraph through its shadow, we can appeal to results on graph realizability to bound the probability such a linear hypergraph exists.  The idea is that the Erd\H{o}s-Gallai condition on the shadow graph will yield a system of inequalities on $\lambda$ for which the proof of \cite{MMM} directly applies.
	
	\begin{proposition}
		Let $\la\vdash 3n$ be chosen uniformly at random. Then the probability there is a $3$-uniform linear hypergraph with degree sequence $\lambda$ is at most $n^{-.003}$.  
	\end{proposition}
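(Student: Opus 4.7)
The plan is to reduce to the $2$-uniform graph realizability problem via the shadow construction, then apply the methods of \cite{MMM}. Suppose $H$ is a $3$-uniform linear hypergraph with degree sequence $\la \vdash 3n$, and let $\partial H$ denote its shadow: the graph on $V(H)$ in which $\{u,v\}$ is an edge whenever $\{u,v,w\}\in E(H)$ for some $w$. Linearity forces distinct shadow edges to arise from distinct hyperedges, so $\partial H$ is simple; and if $\deg_H(v)=\la_i$, then the $\la_i$ hyperedges through $v$ contribute $2\la_i$ pairwise distinct neighbors of $v$ in $\partial H$, since any coincidence among them would produce two hyperedges sharing a pair of vertices. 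Hence the existence of $H$ forces the doubled sequence $2\la := (2\la_1,2\la_2,\ldots) \vdash 6n$ to be the degree sequence of a simple graph.

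By the Erd\H{o}s--Gallai theorem, $2\la$ is graphical if and only if
\begin{equation}\label{eq:linear-EG}
\sum_{i=1}^{k} 2\la_i \;\leq\; k(k-1) + \sum_{i>k}\min(2\la_i,k)
\end{equation}
holds for every $k \geq 1$. Writing $S_k(\la) := \sum_{i\leq k}\la_i$ and $L_k := \la'_{\lceil k/2\rceil} = |\{i : 2\la_i \geq k\}|$, one can rewrite the right-hand side of \eqref{eq:linear-EG} as a linear combination of $kL_k$, $S_{L_k}(\la)$, $k$ and $n$, so that \eqref{eq:linear-EG} becomes a linear inequality in the two cumulative box-counts $S_k(\la)$ and $S_{L_k}(\la)$ of the Young diagram of $\la$. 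Both statistics are governed by the Vershik-Fristedt limit shape at the scale $k = \Theta(\sqrt n)$.

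To conclude, I plan to mirror the strategy of \cite{MMM}. In that work the authors bound the probability that a uniformly random partition $\mu \vdash 2n$ is graphical by isolating a single critical index $k^\ast \asymp \sqrt n$ and showing, via a shape theorem together with a quantitative central limit theorem for partial sums of the random Young diagram, that the Erd\H{o}s--Gallai inequality at $k^\ast$ fails with probability at least $1 - Cn^{-0.003}$. The same strategy applies here with $\mu$ replaced by $2\la$: the limit shape of a uniformly random $\la \vdash 3n$ is the Vershik-Fristedt shape rescaled by $\sqrt 3$, and the needed fluctuation estimates are available in this setting via \cref{th:shape} and its proof in \cite{MMP}. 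The critical $k^\ast$ is specified by an explicit integral equation against the Vershik density, chosen so that the reformulated \eqref{eq:linear-EG} is saturated in expectation. The main obstacle is bookkeeping: identifying the precise critical $k^\ast$ after the doubling $\la \mapsto 2\la$ and the rescaling from $2n$ to $3n$, and verifying that the linear combination of $S_{k^\ast}(\la)$ and $S_{L_{k^\ast}}(\la)$ appearing in \eqref{eq:linear-EG} projects onto a nondegenerate direction of the joint Gaussian limit, so that the $n^{-0.003}$ rate survives intact.
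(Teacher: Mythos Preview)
Your approach is essentially the same as the paper's: both pass to the shadow $\partial H$, observe that linearity forces $\partial H$ to be simple with degree sequence $2\lambda$, apply the Erd\H{o}s--Gallai criterion, and then invoke the analysis of \cite{MMM}. The only difference is one of efficiency: the paper uses the conjugate-partition form of Erd\H{o}s--Gallai, $\sum_{i\le k}\mu_i' \ge \sum_{i\le k}(\mu_i+i)$ for $k\le D(\mu)$, which after substituting $\mu=2\lambda$ lands directly on an inequality of the exact shape handled in the proof of \cite[Theorem~2]{MMM}, so the $n^{-0.003}$ bound can be cited outright rather than re-derived; your plan to redo the critical-$k^\ast$ fluctuation analysis from scratch would work but is unnecessary extra labor.
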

	\begin{proof}
		Suppose that $\la$ was indeed the degree sequence for some linear hypergraph $H$. Then we can set $G = \partial H$ to be its shadow.  If we set $\mu$ to be the degree sequence of $G$, then we have $\mu_j = 2 \lambda_j$ for all $j$.  Since $G$ is a graph, by the Erd\H{o}s-Gallai criterion that 
		
		$$\sum_{i=1}^k \mu_i' \geq \sum_{i=1}^k \mu_i +i \quad \text{ for all }\quad k\leq D(\mu)$$ where $D(\mu)$ represents the size of the largest Durfee square in the Ferrers diagram of $\mu$.   While the approach of Pittel \cite{PITTEL1999123} will give a probability that this tends to $0$, we will follow \cite{MMM} instead for a stronger bound.  
		
		By a quantitative shape theorem of Pittel \cite{pittel2018} (see  \cite[Lemma 3]{MMM} for an explicit version), we have that $D(\mu) \geq n^{-\gamma}$ with probability at least $1 - \Omega(n^{-1/2 + 2\gamma} \log n)$ where $\gamma \approx .25$ (see the proof of \cite[Theorem 2]{MMM} for an optimal choice of $\gamma$ for this proof).
		
		In particular, we then have that \begin{align*}
			\P(\lambda &\text{ is the degree sequence of a linear }3\text{-uniform hypergraph}) \\
			&\quad\leq \P\left(\sum_{i = 1}^k \mu_i' \geq \sum_{i = 1}^k (\mu_i + i) \text{ for all } k \leq D(\mu) \right) \\
			&\quad\leq O(n^{-1/2 + 2\gamma} \log n) + \P\left(\sum_{i = 1}^k \lambda_i' \geq \sum_{i = 1}^k (\lambda_i + i/2) \text{ for all } k \leq n^\gamma \right)\,.
		\end{align*}
		
		By the proof of \cite[Theorem 2]{MMM} this probability is at most $n^{-.003}$.
	\end{proof}
	
	\section{Pyramid Shapes} \label{sec:pyramid}
	
	Recall that for three partitions $\lambda,\mu,\nu \vdash n$, $\mathrm{Pyr}(\lambda,\mu,\nu)$ is the number of pyramids with marginals $(\lambda,\mu,\nu)$.  Here we prove \cref{prop:pyramid-limit} and confirm the conjecture of Pak and Panova to show that $\mathrm{Pyr}(\lambda,\mu,\nu)$ is typically $0$.
	
	The idea will be to identify some inequality amongst the marginals $(\lambda,\mu,\nu)$ of a pyramid and then use the theorem of Pittel or Melczer-Michelen-Mukherjee to upper bound the probability.
	
	One way to view a pyramid shape is as a set of columns on a Ferrer's diagram; if we record the height of each column and place it in the diagram then we obtain a plane partition.  We first observe that the height of these columns---or equivalently the numbers in the plane partition---precisely gives the transpose of one of the marginals:
	\begin{lemma}\label{lem:columns}
		Let $M$ be a pyramid with marginals given by $(\lambda,\mu,\nu)$.   Let $\rho$ be the partition whose parts are equal to $(\sum_{k} M(i,j,k))_{i,j}$.  Then $\rho = \nu'$.
	\end{lemma}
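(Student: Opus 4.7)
The plan is to use the pyramid property in the $k$-direction to reduce the table to a single two-dimensional array of column heights, and then observe that conjugation of partitions exactly interchanges ``how many column heights are at least $k$'' with ``what is the $k$-th largest column height.''

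First, I would fix $(i,j)$ and apply the pyramid condition to the triples $(i,j,k_1)$ and $(i,j,k_2)$ with $k_1 \leq k_2$: this forces $M(i,j,k_1) \geq M(i,j,k_2)$, so the binary sequence $k \mapsto M(i,j,k)$ is non-increasing. Hence there is a unique $c_{i,j} \in \{0,1,2,\ldots\}$ such that $M(i,j,k) = \one_{k \leq c_{i,j}}$, and by definition $c_{i,j} = \sum_k M(i,j,k)$, so $\rho$ is (up to reordering and dropping zeros) the multiset $\{c_{i,j}\}_{i,j}$.

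Next, I would compute $\nu_k$ in terms of the column heights. From $M(i,j,k) = \one_{c_{i,j} \geq k}$, the marginal is
\[
\nu_k \;=\; \sum_{i,j} M(i,j,k) \;=\; \bigl|\{(i,j) : c_{i,j} \geq k\}\bigr|,
\]
i.e.\ $\nu_k$ counts the column heights that are at least $k$. Since $\rho$ is obtained by sorting those column heights in non-increasing order, the standard identity for conjugate partitions gives
\[
\rho'_k \;=\; \bigl|\{t : \rho_t \geq k\}\bigr| \;=\; \bigl|\{(i,j) : c_{i,j} \geq k\}\bigr| \;=\; \nu_k,
\]
so $\rho' = \nu$, and equivalently $\rho = \nu'$.

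The only mild care needed (and the closest thing to an obstacle) is the bookkeeping between the multiset $\{c_{i,j}\}$ and the partition $\rho$: one must discard the zero entries (for which the corresponding columns of $M$ are empty) and sort the remainder, but once this is done the pyramid conditions in the $i$- and $j$-directions guarantee $c_{i,j}$ is non-increasing in both coordinates, so no ambiguity arises. The rest is the elementary fact $(\rho')' = \rho$ applied to the computation above.
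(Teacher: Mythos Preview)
Your proof is correct and is essentially the same argument as the paper's: both compute $\rho'_k = |\{(i,j) : \sum_\ell M(i,j,\ell) \geq k\}| = |\{(i,j) : M(i,j,k)=1\}| = \nu_k$, using the pyramid condition in the $k$-direction to pass from column heights to the indicator $M(i,j,k)$. Your introduction of the explicit column heights $c_{i,j}$ and the remark about monotonicity in $i,j$ are a bit more verbose than the paper's one-line computation, but the logic is identical.
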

	\begin{proof}
		For each $r$ compute \begin{align*}
			\rho_r' = \left| \left\{ (i,j) : \sum_{k} M(i,j,k) \geq r  \right\}\right| = \left| \left\{ (i,j) :  M(i,j,r) = 1  \right\}\right| = \sum_{i,j} M(i,j,r)  = \nu_r
		\end{align*}
		where the second equality is due to the fact that $M$ is a pyramid.
	\end{proof}
	
	From here we then observe an ordering that will quickly provide  \cref{prop:pyramid-limit}:
	
	\begin{lemma}\label{lem:pyramid}
		Let $M$ be a pyramid with $|M| = n$ and marginals $(\lambda,\mu,\nu)$.  Then $$\nu' \preccurlyeq \lambda\,.$$ 
	\end{lemma}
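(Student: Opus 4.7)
The plan is to combine \cref{lem:columns} with the pyramid condition in order to directly verify the dominance inequality $\sum_{r \le k}\nu'_r \le \sum_{r \le k}\lambda_r$ for every $k$. By \cref{lem:columns}, $\nu'$ is precisely the partition obtained by listing the column sums $c(i,j) := \sum_{k} M(i,j,k)$ in non-increasing order. On the other hand, the marginal definition gives $\lambda_i = \sum_{j} c(i,j)$, so $\sum_{r=1}^k \lambda_r$ is the total mass of $c$ on the ``slab'' $\{(i,j) : i \le k\}$. The goal therefore reduces to showing that the sum of the $k$ largest column sums is at most the total mass of $c$ on this slab.

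The key observation is that the pyramid property forces $c(i,j)$ to be weakly decreasing in each of the two coordinates $i$ and $j$ separately (since reducing $i$ or $j$ can only replace $0$ entries by $1$ entries in the $k$-fiber). Once this monotonicity is established, I would argue as follows. Fix any collection of $k$ pairs $(i_1,j_1),\dots,(i_k,j_k)$ whose $c$-values realize the top $k$ of the multiset $\{c(i,j)\}$. Group these pairs by their second coordinate, letting $m_j$ denote the number chosen with second coordinate $j$, so $\sum_j m_j = k$. Since $c(\cdot,j)$ is weakly decreasing in its first argument, we may replace the selected entries in column $j$ by the top $m_j$ entries $c(1,j),c(2,j),\dots,c(m_j,j)$ without decreasing the total. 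Because $m_j \le k$ for each $j$, all replacement entries lie in the slab $\{i \le k\}$, and hence
\[
\sum_{r=1}^k \nu'_r \;=\; \sum_{a=1}^k c(i_a,j_a) \;\le\; \sum_{j}\sum_{i=1}^{m_j} c(i,j) \;\le\; \sum_{i=1}^k \sum_{j} c(i,j) \;=\; \sum_{i=1}^k \lambda_i,
\]
which is exactly the dominance inequality at level $k$.

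The only real content to verify carefully is the monotonicity of $c(i,j)$ in each coordinate, which is an immediate consequence of the pyramid condition: if $M(i,j,k)=1$ then $M(i',j,k)=1$ for all $i'\le i$ by the defining inequality, so $c$ is non-increasing in $i$, and symmetrically in $j$. Everything else is a rearrangement/pigeonhole bookkeeping, so I do not expect any genuine obstacle; the proof should be short. The one subtlety worth pausing on is that the selected top-$k$ collection of pairs may not be unique when there are ties, but the argument works for any choice since we only use the inequality $c(i_a,j_a) \le c(i,j)$ for $i \le i_a$ within the fixed column $j=j_a$.
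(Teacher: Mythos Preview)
Your proof is correct and shares the same skeleton as the paper's: invoke \cref{lem:columns} to identify $\nu'$ with the multiset of column sums $c(i,j)=\sum_k M(i,j,k)$, then use the monotonicity of $c$ coming from the pyramid condition to bound the sum of the $k$ largest column sums by $\sum_{i\le k}\lambda_i$.

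The bookkeeping differs slightly. The paper observes that, since $c$ is weakly decreasing in \emph{both} coordinates, the pairs realizing the $k$ largest values of $c$ can be chosen so that $\{(i_\ell,j_\ell):\ell\le k\}$ is itself a Ferrers diagram, hence contained in $[k]\times[k]$; the dominance inequality then follows from $\sum_{\ell\le k} c(i_\ell,j_\ell)\le \sum_{i,j\le k} c(i,j)\le \sum_{i\le k}\sum_j c(i,j)$. Your argument instead groups the top-$k$ pairs by their $j$-coordinate and pushes each group upward within its column using only monotonicity in $i$, landing everything in the slab $\{i\le k\}$. Both routes are short; yours is marginally more elementary in that it needs monotonicity in a single coordinate, while the paper's Ferrers-diagram phrasing makes the geometric picture (columns of the pyramid sitting over a Young diagram) more explicit.
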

	\begin{proof}
		By  \cref{lem:columns}, note that we may find distinct pairs $(i_1,j_1),(i_2,j_2),\ldots $ so that for each $r$ we have $\nu_r' = \sum_k M(i_r,j_r,k)$.  Further, since $M$ is a pyramid we may choose $(i_r,j_r)$ so that the selection of indices $\{(i_\ell,j_\ell) : \ell \leq r\} $ forms a Ferrer's diagram for each $r$.  In this case, we immediately have that $i_r,j_r \leq r$ for all $r$.
		
		For each $r$, we thus have \begin{equation*}
			\lambda_1 + \cdots +\lambda_r = \sum_{i \leq r} \sum_{j,k} M(i,j,k) \geq \sum_{i,j \leq r} \sum_{k} M(i,j,k) \geq \sum_{\ell = 1}^r \sum_{k} M(i_\ell,j_\ell,k) = \nu_1' + \cdots + \nu_k'\,. \qedhere \end{equation*}
	\end{proof}
	
	\begin{proof}[Proof of  \cref{prop:pyramid-limit}]
		By \cref{lem:pyramid} we have $$\P_n(\mathrm{Pyr}(\lambda,\mu,\nu) \geq 1) \leq \P_n(\nu' \preccurlyeq \lambda) = O(n^{-.003})$$
		where the bound is by \cite{MMM}.
	\end{proof}

	\section*{Acknowledgments}
	\noindent Both authors are supported in part by NSF grants DMS-2137623 and DMS-2246624. The authors thank Matthew Jenssen for comments on a previous draft.

	\bibliographystyle{abbrv}
	\bibliography{refs}
	
\end{document}